\theoremstyle{plain}
\newtheorem{theorem}{Theorem}[section]
\newtheorem{proposition}{Proposition}[section]
\theoremstyle{definition}
\newtheorem{remark}{Remark}[section]
\newtheorem*{notation*}{Notation}
\numberwithin{equation}{section}
\DeclareMathOperator*{\esssup}{ess\,sup}
\begin{document}
\title{A Fractal Operator on Some Standard Spaces of Functions}
\author[P. Viswanathan]{P. Viswanathan}
\address{Australian National University}
\author[M.A. Navascu\'{e}s]{M.A. Navascu\'{e}s}
\address{Universidad de Zaragoza}

\begin{abstract}
By appropriate  choices of elements in the underlying iterated function system, methodology of fractal interpolation entitles one to associate a family of continuous  self-referential functions  with  a prescribed  real-valued continuous function  on a  real compact interval. This procedure elicits
 what is referred to as $\alpha$-fractal operator on $\mathcal{C}(I)$, the space of all real-valued continuous functions defined on a compact interval $I$. With an eye towards connecting fractal functions with other branches of mathematics, in this article, we continue to investigate fractal operator in more general  spaces such as the space $\mathcal{B}(I)$ of all bounded functions  and Lebesgue  space $\mathcal{L}^p(I)$, and some  standard  spaces of smooth functions such as the space $\mathcal{C}^k(I)$ of $k$-times continuously differentiable functions,  H\"{o}lder spaces $\mathcal{C}^{k,\sigma}(I)$, and Sobolev spaces $\mathcal{W}^{k,p}(I)$. Using properties of the $\alpha$-fractal operator, the existence of Schauder basis consisting of self-referential functions  is established.
\end{abstract}

\maketitle

\textbf{Keywords.} Fractal functions; Fractal operator; Function spaces; Schauder basis.\\

\textbf{AMS subject Classifications. } 28A80; 47A05; 47H09; 58C05; 65D05.

\section{Introduction}

It is well-known that Fractal Interpolation Function (FIF), a notion introduced by Barnsley \cite{Barnsley1,BH},  provides an effective tool for the approximation of rough functions and an alternative to traditional nonrecursive  approximation methods that generally uses smooth functions. Fractal function was originally introduced as a continuous function interpolating a prescribed set of data points in the Cartesian plane. These fractal approximants are constructed as attractors of appropriate Iterated Function Systems (IFSs). Furthermore, a fractal function is the fixed point of a suitable contraction map (Read-Bajraktarevi\'{c} operator) defined on a subset of the space of all continuous real-valued  functions; see Section \ref{FFS2}.
\par
 Besides being a source of smooth and non-smooth approximants catering to the modeling problem at hand, the notion of FIF provides a bounded linear operator, termed $\alpha$-fractal operator, on the space  $\mathcal{C}(I)$ of all continuous real-valued  functions on a compact interval $I \subset \mathbb{R}$. In earlier papers \cite{N1,N3,N9,N10a}, the second author  studied this
 fractal operator in detail.  This operator theoretic formalism of fractal functions enables them to interact with other traditional branches of pure and applied mathematics including functional analysis, operator theory, complex analysis, harmonic analysis, and approximation theory.
\par
In  the current article, we continue to explore  the aforementioned fractal operator, however, within the setting of function spaces such as the space of bounded functions $\mathcal{B}(I)$, Lebesgue spaces $\mathcal{L}^p(I)$, space of $k$-times continuously differentiable functions $\mathcal{C}^k(I)$,  H\"{o}lder spaces $\mathcal{C}^{k,\sigma}(I)$, and Sobolev spaces $\mathcal{W}^{k,p}(I)$ that are prevalent in modern analysis and function theory. Hence, it is our view that the present work enriches the theory of fractal functions and facilitate them to find further applications in various fields such as numerical analysis, functional analysis, harmonic analysis, for instance,  in connection with PDEs. In particular, we expect that the current study will pave the way towards the investigation of shape preserving fractal approximation in various function  spaces considered herein. For shape preserving fractal approximation in the space of continuous functions, the reader is invited to refer \cite{VCN}.
\par
The research reported here is admittedly influenced to an extent by works on fractal functions and local fractal functions by Massopust; see \cite{PM1,PM2}. However, it should be noted that our exposition aims at a different goal. Our main observations are centered on the $\alpha$-fractal operator defined on standard spaces of functions, in contrast to the Read-Bajraktarevi\'{c} operator that define fractal functions, and we find this link quite intriguing.
\par
Turning to the layout of our paper, in Section \ref{FFS2}, we assemble the requisite general material. In Section \ref{FFS3}, we construct $\alpha$-fractal functions in various spaces of functions. Section \ref{FFS4} concerns development of some foundational aspects of the associated fractal operator.
\section{Notation and Preliminaries} \label{FFS2}
In this section we  provide a rudimentary introduction to function spaces dealt in this article. Further, we
review the requisite background material on fractal functions.  We recommend the reader to consult the references  \cite{Barnsley1,N1,T1,T2} for further details.
\par
 The set of natural numbers will be denoted by $\mathbb{N}$, whilst the set of real numbers by $\mathbb{R}$.  For a fixed $N \in \mathbb{N}$, we shall write $\mathbb{N}_N$ for the set of first $N$ natural numbers. By a self-map we mean a function whose domain and codomain are same. For a given compact interval $I=[c,d] \subset \mathbb{R}$, let $$\mathcal{B}(I):= \big\{g: I\to \mathbb{R}: ~g~ \text{is bounded on}~ I\big\}.$$
 The functional
 $$\|g\|_\infty := \sup\big\{|g(x)|: x \in I\big\},$$
 termed uniform norm, turns  $\mathcal{B}(I)$ to a Banach space.
For $k \in \mathbb{N} \cup \{0\}$, consider  the space
$$\mathcal{C}^k(I):= \big\{g: I\to \mathbb{R}: ~g~ \text{is}~ k \text{-times differentiable  and}~ g^{(k)} \in \mathcal{C}(I)\big\}$$
endowed with the norm
$$\|g||_{\mathcal{C}^k} := \max \big\{\|g^{(r)}\|_\infty: r \in \mathbb{N}_k \cup\{0\}\big\}.$$
In addition to the Banach spaces $\mathcal{B}(I)$ and $\mathcal{C}^k(I)$, we require the following spaces that have numerous applications in analysis. \\
For $0 < p \le \infty$, let $$ \mathcal{L}^p(I):=  \big\{g: I \to \mathbb{R}: ~g~ \text{is measurable and}~\|g\|_{p} < \infty \big\},$$
 where the ``norm" is given by
\begin{equation*}
 \|g\|_ p= \left\{\begin{array}{rll}
& \big[\int \limits_{I}|g(x)|^p ~\mathrm{d}x\big]^{1/p} , \; 0 < p < \infty,\\
&  \esssup_{x\in I} |g(x)|, \; \; \; \; p =\infty.
\end{array}\right.
\end{equation*}
We recall that for $1 \le p \le \infty$,  $\|.\|_ p$ defines a norm on $\mathcal{L}^p(I)$, and $\big(\mathcal{L}^p(I),\|.\|_p\big)$  is a Banach space. Note that for $p=2$, the space $\mathcal{L}^2(I)$ is a Hilbert space with respect to the inner product
 $$ <g_1,g_2> := \int_{I} g_1 g_2~ \mathrm{d}x$$
 For $ 0 <p <1$, $\|.\|_p$ is not really a norm, but only a quasi-norm, that is, in the place of triangle inequality we have
$$\|g_1+g_2\|_p \le 2^{\frac{1}{p}} \big(\|g_1\|_p + \|g_2\|_p \big)$$
and $\mathcal{L}^p(I)$ is a quasi-Banach space.\\
Let $u,v \in \mathcal{L}^1(I)$. We say that $v$ is $k$-th weak derivative of $u$ and write $D^k u=v$, provided
$$ \int_{I} u~ D^k \varphi~ \mathrm{d}x = (-1)^k \int_{I} v\varphi~ \mathrm{d}x$$
for all infinitely differentiable functions $\varphi$ with $\varphi(c)=\varphi(d)=0$. \\
For $1 \le p \le \infty$ and $k \in \mathbb{N} \cup \{0\}$, let $\mathcal{W}^{k,p}(I)$ denote the usual Sobolev space. That is,
$$\mathcal{W}^{k,p}(I) := \big\{g: I \to \mathbb{R}:  D^jg \in \mathcal{L}^{p} (I), ~ j=0,1,\dots, k\},$$
 where $D^jg$ denotes the $j$-th weak or distributional derivative of $g$.  The space $\mathcal{W}^{k,p}(I)$  endowed with the norm
 \begin{equation*}
\|g\|_{\mathcal{W}^{k,p}}:= \left\{\begin{array}{rllll}
&\Big[\sum_{j=0}^k \| D^j g \|_p^p \Big]^{\frac{1}{p}}, \; \text{for} ~~ p \in [1, \infty),\\
& \sum_{j=0}^k \|D^j g\|_\infty, \; \; \; \;\; \;\text{for} ~~p =\infty.\\
\end{array}\right.
\end{equation*}
 is a Banach space. For $p=2$, it is customary to denote  $\mathcal{W}^{k,p}(I)$ by $\mathcal{H}^k(I)$, which is a Hilbert space. \\
A function $u: I \to \mathbb{R}$ is said to be  H\"{o}lder continuous with exponent $\sigma$ if
$$|u(x)-u(y)| \le C |x-y|^\sigma,\quad \text {for all} \quad x,y \in I \quad \text{and for some}\quad  C>0.$$
 For H\"{o}lder continuous functions $u$ with exponent $\sigma$, let us define $\sigma$-th  H\"{o}lder semi-norm  as $$[u]_\sigma = \sup_{ x,y \in I,~ x \neq y} \frac{|u(x)-u(y)|}{|x-y|^\sigma}$$
and consider the H\"{o}lder space
$$\mathcal{C}^{k,\sigma} (I) :=\big\{ g \in \mathcal{C}^{k-1}(I): g^{(k)}~~ \text{is H\"{o}lder continuous with exponent}~~ \sigma \big\}.$$
The space $\mathcal{C}^{k,\sigma}(I)$ is a Banach space when endowed with the norm
$$\|g\|_{\mathcal{C}^{k,\sigma}}:= \sum_{j=0}^k\|g^{(j)}\|_\infty + [g^{(k)}]_\sigma.$$
\par
Having exposed the reader with function spaces that we shall encounter in due course, next we  provide an overview  of  fractal interpolation and related ideas.  Assume that  $N \in  \mathbb{N}$, $N>2$. Let  $\big\{(x_i, y_i) \in \mathbb{R}^2: i \in \mathbb{N}_N\big\}$ denote the prescribed set of interpolation data with strictly increasing abscissae. Set $I = [x_1,x_N]$ and $I_i = [x_i,x_{i+1}]$ for $i \in \mathbb{N}_{N-1}$.  Fractal interpolation constructs   a continuous function $g: I \to \mathbb{R}$ satisfying $g(x_i)=y_i$ for all $i \in \mathbb{N}_N$ and whose graph $G(g)$ is a fractal in the sense that $G(g)$ is a union of transformed copies of itself (see (\ref{SAINsr})). Suppose that $L_i: I \rightarrow I_i$, $i \in \mathbb{N}_{N-1}$ are affinities  satisfying
\begin{equation*}\label{ICAFWeq1}
L_i(x_1)=x_i,\quad L_i(x_N)=x_{i+1}.
\end{equation*}
For $i\in \mathbb{N}_{N-1}$, let
 $F_i: I \times \mathbb{R} \to \mathbb{R}$ be continuous functions fulfilling
 \begin{equation*}\label{ICAFWeq2}
\big|F_i(x,y)-F_i(x,y^*)\big| \le c_i |y-y^*|, \quad F_i(x_1,y_1)=y_i,~~~ F_i(x_N,y_N)=y_{i+1}.
\end{equation*}
for $y,y^* \in \mathbb{R}$ and $0<c_i<1$. Define
\begin{equation*}
\mathcal{C}_{y_1,y_N}(I)= \{h \in \mathcal{C}(I): h(x_1)=y_1, h(x_N)=y_N\}.
\end{equation*}
It is readily observed that $\mathcal{C}_{y_1,y_N}(I)$ is a closed (metric) subspace of the Banach space $\big(\mathcal{C}(I), \|.\|_\infty \big)$. Define the Read-Bajraktarevi\'{c} (RB) operator
$T: \mathcal{C}_{y_1,y_N}(I) \to  \mathcal{C}_{y_1,y_N}(I)$ via
$$ (Th) (x) = F_i \big(L_i^{-1}(x), h \circ L_i^{-1}(x)\big), ~ x \in I_i, ~ i \in \mathbb{N}_{N-1}.$$
The nonlinear mapping $T$ is a contraction with a contraction factor $c:= \max\{c_i: i \in \mathbb{N}_{N-1}\}$. Consequently, by the Banach fixed point theorem, $T$ has a unique fixed point, say  $g$. Further, $g$ interpolates the data $\{(x_i,y_i): i \in \mathbb{N}_N\}$ and enjoys the functional equation
$$g\big(L_i(x)\big)= F_i\big(x, g(x)\big), \quad  x\in I, \quad i \in \mathbb{N}_{N-1}.$$
 Let $w_i: I\times \mathbb{R} \to I_i \times \mathbb{R} \subset I\times \mathbb{R}$ be defined by $$w_i(x,y)=\big(L_i(x),F_i(x,y)\big), \quad i \in \mathbb{N}_{N-1}.$$  Consider the iterated function system (IFS) $W=\big\{I \times \mathbb{R}; w_i, i \in \mathbb{N}_{N-1}\big\}$. Then the \emph{attractor} $\mathrm{G}$ of the IFS $W$  is the graph $G(g)$ of the function $g$, that is,
\begin{equation}\label{SAINsr}
G(g)= \cup_{i \in \mathbb{N}_{N-1}} w_i \big (G(g) \big).
\end{equation}
Therefore, $g$ is a \emph{self-referential function}.
\par
Navascu\'{e}s \cite{N1} observed that the theory of FIF can be used to generate a family of continuous functions having fractal characteristics from a prescribed continuous function.   To this end, for a given $f \in \mathcal{C}(I)$, consider a partition $\Delta:=\{x_1,x_2,\dots, x_N\}$ of $I$ satisfying $x_1<x_2<\dots<x_N$, a continuous  map $ b: I \to \mathbb{R}$ such that $b \neq f$, $b(x_1)=f(x_1)$ and $b(x_N)=f(x_N)$, and  $N-1$ real numbers $\alpha_i$ satisfying $|\alpha_i|<1$. Define an IFS through the maps
\begin{equation}\label{ICAFWeq4}
L_i(x)=a_ix+d_i, \quad F_i(x,y)=\alpha_i y + f \circ L_i(x) -\alpha_i b(x), \quad
i\in \mathbb{N}_{N-1}.
\end{equation}
 The corresponding FIF denoted by
$f^{\alpha}_{\Delta, b}= f^\alpha$ is referred to as $\alpha$-fractal function for $f$ (fractal perturbation of $f$) with respect to a scale vector $\alpha=(\alpha_1,\alpha_2,\dots, \alpha_{N-1})$, base function $b$, and  partition $\Delta$. Here the interpolation points  are $ \big\{(x_i,f(x_i)): i \in \mathbb{N}_N\big\}$. The fractal dimension (Minkowski dimension or Hausdorff dimension) of the function $f^\alpha$  depends on the parameter $\alpha \in (-1,1)^{N-1}$. The function $f^\alpha$  is the fixed point of the operator $T_{\Delta,b,f}^\alpha: \mathcal{C}_f (I) \to \mathcal{C}_f (I)$ defined by
\begin{equation}\label{RBCS}
(T_{\Delta,b,f}^\alpha g) (x) = f(x) + \alpha_i \big(g-b) \circ L_i^{-1}(x), \quad x \in I_i, \quad i \in \mathbb{N}_{N-1},
\end{equation}
where $\mathcal{C}_f (I) := \big\{g \in \mathcal{C}(I): ~g(x_1)=f(x_1), ~g(x_N)=f(x_N)\big\}$. Consequently, the $\alpha$-fractal function corresponding to $f$ satisfies the self-referential equation
\begin{equation}\label{ICAFWeq4}
f_{\Delta,b}^\alpha(x)= f(x) + \alpha_i \big(f_{\Delta,b}^\alpha-b\big)(L_i^{-1}(x)), \quad x \in I_i, \quad i \in \mathbb{N}_{N-1}.
\end{equation}
 Assume that the base function $b$ depends linearly on $f$, say $b=Lf$, where $L: \mathcal{C}(I) \to \mathcal{C}(I)$  is a bounded linear map.  Then the following map  referred to as \emph{$\alpha$-fractal operator}
$$\mathcal{F}_{\Delta,b}^\alpha: \mathcal{C}(I) \rightarrow \mathcal{C}(I), \quad \mathcal{F}_{\Delta,b}^\alpha (f)=f_{\Delta,b}^\alpha$$ is a bounded linear operator.\\
To obtain fractal functions with more flexibility, iterated function system wherein scaling factors are replaced by scaling functions received attention in the recent literature on fractal functions. That is, one may consider the IFS with maps
\begin{equation}\label{Sain1}
L_i(x)=a_ix+d_i,\quad  F_i(x,y)=\alpha_i(x) y + f \circ L_i(x) -\alpha_i(x) b(x),\quad
i\in \mathbb{N}_{N-1},
\end{equation}
 where $\alpha_i$ are continuous functions satisfying $\max\{\|\alpha_i\|_\infty: i \in \mathbb{N}_{N-1}\} <1$,   and $b \neq f$ is a continuous function that agrees with $f$ at the extremes of the interval $I$ (see, for instance, \cite{WY}).
The corresponding  $\alpha$-fractal function is the fixed point of the RB-operator
\begin{equation}\label{RB}
(T_{\Delta,b,f}^\alpha g)(x) = f(x) + \alpha_i \big(L_i^{-1}(x)\big) (g-b)\big(L_i^{-1}(x) \big),\quad x \in I_i,\quad i \in \mathbb{N}_{N-1},
\end{equation}
and hence obeys the functional equation
\begin{equation}\label{Sain2}
f_{\Delta,b}^\alpha(x)= f(x) + \alpha_i\big(L_i^{-1}(x)) \big(f_{\Delta,b}^\alpha-b) \big(L_i^{-1}(x)\big), \quad x \in I_i,\quad  i \in \mathbb{N}_{N-1}.
\end{equation}
\section{$\alpha$-Fractal Functions on Various Function Spaces}\label{FFS3}
In this section, by considering suitable conditions on the scaling functions $\alpha_i$ and base function $b$,
we construct $\alpha$-fractal function $f_{\Delta,b}^\alpha$ corresponding to $f$ in various function spaces. The idea is to choose the parameters so that the map $T_{\Delta,b,f}^\alpha$ becomes a contraction in the space under consideration and $f_{\Delta,b}^\alpha$ is the corresponding fixed point, whose existence is ensured by the Banach fixed point theorem. For the sake of notational simplicity, we may suppress the dependence on parameters to denote $T_{\Delta,b,f}^\alpha$ by $T$, $f_{\Delta,b}^\alpha$ by $f^\alpha$ and the corresponding fractal operator $\mathcal{F}_{\Delta,b}^\alpha$ by $\mathcal{F}^\alpha$. The
following theorems provide sufficient  conditions on the scaling functions
and  base functions for the  associate fractal function $f^\alpha$ to share the properties of the
original function $f$.
\begin{theorem}\label{FFSB}
Let $ f \in \mathcal{B}(I)$. Assume that $\Delta:= \{x_1,x_2, \dots, x_N\}$ is a partition of~ $I$  with strictly increasing abscissae, and $I_i=[x_i,x_{i+1})$ for $i \in \mathbb{N}_{N-2}$, $I_{N-1}=[x_{N-1},x_N]$ be the corresponding subintervals. Further assume that
the maps $L_i: I \to I_i$ are affinities satisfying $L_i(x_1)=x_i$, $L_i(x_N^-)=x_{i+1}$ for $i \in \mathbb{N}_{N-1}$, and the base function $b$ and scaling functions $\alpha_i$, $i \in \mathbb{N}_{N-1}$ are real-valued bounded functions on $I$. Then the RB-operator defined in (\ref{RB}) is a well-defined self-map on $\mathcal{B}(I)$. Furthermore, if $\|\alpha\|_\infty = \max \{\|\alpha_i\|_\infty: i \in \mathbb{N}_{N-1} \} <1$, then $T$ is a contraction and has a unique fixed point $f^\alpha \in \mathcal{B}(I)$.
\end{theorem}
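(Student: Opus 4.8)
The plan is to set up $T$ as a self-map of the complete metric space $(\mathcal{B}(I),\|\cdot\|_\infty)$ and then invoke the Banach fixed point theorem. Thus the argument splits into three routine verifications: that $T$ is well-defined pointwise, that it maps $\mathcal{B}(I)$ into itself, and that it is a contraction when $\|\alpha\|_\infty<1$. I expect the only point needing care to be the bookkeeping around the partition nodes in the well-definedness step; everything else follows from the boundedness hypotheses by direct estimates.

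\textbf{Well-definedness.} The crucial observation is that the subintervals $I_1,\dots,I_{N-1}$, being half-open $[x_i,x_{i+1})$ for $i\in\mathbb{N}_{N-2}$ and closed $[x_{N-1},x_N]$ for $i=N-1$, form a genuine partition of $I=[x_1,x_N]$: they are pairwise disjoint and their union is $I$. Hence every $x\in I$ lies in exactly one $I_i$, so the branch of the formula \eqref{RB} applying to $x$ is unambiguous; in contrast to the continuous case, this is why no compatibility condition linking $b$ and $f$ at the nodes $x_i$ is imposed here. Since each $L_i\colon I\to I_i$ is an affinity with $L_i(x_1)=x_i$ and $L_i(x_N^-)=x_{i+1}$, it is injective and its inverse is defined on $I_i$ with values in $I$; consequently $g\circ L_i^{-1}$, $b\circ L_i^{-1}$ and $\alpha_i\circ L_i^{-1}$ all make sense on $I_i$, and $f$ is defined on all of $I$. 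Therefore, for each $g\in\mathcal{B}(I)$, the quantity $(Tg)(x)$ is a well-defined real number for every $x\in I$.

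\textbf{$T$ is a self-map of $\mathcal{B}(I)$ and a contraction.} Fix $g\in\mathcal{B}(I)$. For $x\in I_i$, the triangle inequality and the boundedness of $f$, $\alpha_i$, $b$ give
\[
|(Tg)(x)| \le \|f\|_\infty + \|\alpha_i\|_\infty\big(\|g\|_\infty+\|b\|_\infty\big) \le \|f\|_\infty + \|\alpha\|_\infty\big(\|g\|_\infty+\|b\|_\infty\big),
\]
a finite constant independent of $x$; taking the supremum over $x\in I$ shows $Tg\in\mathcal{B}(I)$, so $T\colon\mathcal{B}(I)\to\mathcal{B}(I)$. For the contraction estimate, take $g,h\in\mathcal{B}(I)$; for $x\in I_i$ the term $f(x)$ cancels in $(Tg)(x)-(Th)(x)$, leaving
\[
|(Tg)(x)-(Th)(x)| = \big|\alpha_i\big(L_i^{-1}(x)\big)\big|\,\big|(g-h)\big(L_i^{-1}(x)\big)\big| \le \|\alpha\|_\infty\,\|g-h\|_\infty .
\]
Taking the supremum over $x\in I$ yields $\|Tg-Th\|_\infty\le\|\alpha\|_\infty\,\|g-h\|_\infty$, so under the hypothesis $\|\alpha\|_\infty<1$ the map $T$ is a contraction.

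\textbf{Conclusion.} Since $\big(\mathcal{B}(I),\|\cdot\|_\infty\big)$ is a Banach space, hence a complete metric space, the Banach fixed point theorem applies to the contraction $T$ and furnishes a unique fixed point $f^\alpha\in\mathcal{B}(I)$, which by construction satisfies the self-referential relation obtained from \eqref{RB}. The only step that is not a one-line estimate is the partition-node bookkeeping in the well-definedness argument, and this is resolved entirely by the half-open structure of the $I_i$.
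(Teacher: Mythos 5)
Your proof is correct and follows essentially the same route as the paper: the paper likewise treats well-definedness and the self-map property as straightforward consequences of the boundedness hypotheses and the disjoint half-open subintervals, and establishes the same pointwise estimate $|(Tg_1)(x)-(Tg_2)(x)|\le\|\alpha_i\|_\infty\|g_1-g_2\|_\infty$ on each $I_i$ before invoking the Banach fixed point theorem. Your write-up merely spells out the node bookkeeping and the bound $\|Tg\|_\infty\le\|f\|_\infty+\|\alpha\|_\infty(\|g\|_\infty+\|b\|_\infty)$ that the paper leaves implicit.
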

\begin{proof}
Under the stated hypotheses on functions $b$, $\alpha_i$ and affinities $L_i$, it is straightforward to show that $T$ is well-defined and maps into $\mathcal{B}(I)$. Let $g_1$ and $g_2$ be in $\mathcal{B}(I)$. For $x \in I_i$,
\begin{equation*}
\begin{split}
\big|(Tg_1)(x)-(Tg_2)(x)\big| \le  &~\big |\alpha_i\big(L_i^{-1}(x) \big)\big|~\big|(g_1-g_2)\big(L_i^{-1}(x)\big)\big|\\
\le &~ \|\alpha_i\|_\infty \|g_1-g_2\|_\infty.
\end{split}
\end{equation*}
Therefore
$$\|Tg_1-Tg_2\|_\infty \le \|\alpha\|_\infty \|g_1-g_2\|_\infty,$$
and the assertion follows.
\end{proof}
In the next theorem, we construct $\alpha$-fractal functions in Lebesgue space $\mathcal{L}^p(I)$, for $0 < p \le \infty$. In reference \cite{N9},
 $\alpha$-fractal function in $\mathcal{L}^p(I)$, $1 \le p < \infty$ is obtained as the limit of a sequence of continuous $\alpha$-fractal functions using standard density argument. The present formalism makes the self-referentiality of $\alpha$-fractal functions in $\mathcal{L}^p(I)$ to be explicit and includes   the case of quasi-Banach spaces $\mathcal{L}^p(I)$, $0 <p<1$ and a more general space $\mathcal{L}^\infty(I)$, the space  of all essentially bounded functions. Further, aiming at more generality, we work  with  scaling functions in contrast to the traditional setting of scaling factors.
\begin{theorem} \label{RRAFIFthm1}
Let $f \in \mathcal{L}^p(I)$, $0<p\le \infty$.
Suppose that $\Delta=\{x_1,x_2,\dots, x_N\}$ is a partition of~ $I$ satisfying $x_1<x_2<\dots<x_N$, $I_i:=[x_i,x_{i+1})$ for $i \in \mathbb{N}_{N-2}$ and $I_{N-1}:=[x_{N-1}, x_N]$.
Let $L_i:I \rightarrow I_i$ be  affine maps $L_i(x)=a_i x+d_i$ satisfying $L_i(x_1)=x_i$ and $L_i(x_N^-)=x_{i+1}$ for $i\in \mathbb{N}_{N-1}$. Choose $\alpha_i \in \mathcal{L}^\infty(I)$ for all $i \in \mathbb{N}_{N-1}$ and $b \in \mathcal{L}^p(I)$. Then the RB operator $T$ given in (\ref{RB}) defines a self-map on $\mathcal{L}^p(I)$.
Furthermore, for the scaling functions $\alpha_i$, $i \in \mathbb{N}_{N-1}$ satisfying the following condition,  $T$ is a contraction map on $\mathcal{L}^p(I)$.
\begin{equation*}
\left\{\begin{array}{rllll}
& \Big[\sum \limits_{i \in \mathbb{N}_{N-1}} a_i \|\alpha_i\|_\infty^p\Big]^{1/p}<1 , \; \text{for} ~~ p \in [1, \infty),\\
& \underset{i \in \mathbb{N}_{N-1}}\max~\|\alpha_i\|_\infty <1 , \; \; \; \;\; \;\text{for} ~~p =\infty,\\
&\sum \limits_{i \in \mathbb{N}_{N-1}} a_i \|\alpha_i\|_\infty^p<1 , \; \; \; \;\; \;\text{for} ~~p \in (0,1).
\end{array}\right.
\end{equation*}
Consequently, the corresponding  fixed point $f^\alpha \in \mathcal{L}^p(I)$ obeys the self-referential equation (\ref{Sain2}).
\end{theorem}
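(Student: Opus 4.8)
The plan is to invoke the Banach fixed point theorem, working on $\big(\mathcal{L}^p(I),\|\cdot\|_p\big)$ when $p \in [1,\infty]$ and on $\mathcal{L}^p(I)$ equipped with the complete metric $d_p(g_1,g_2):=\|g_1-g_2\|_p^p$ when $p \in (0,1)$; the latter is a genuine metric because $t\mapsto t^p$ is subadditive on $[0,\infty)$ for $0<p<1$. The single analytic ingredient throughout is the change-of-variables identity induced by the affinity $L_i$, namely for $h\in \mathcal{L}^p(I)$,
$$\int_{I_i}\big|h\big(L_i^{-1}(x)\big)\big|^p\,\mathrm{d}x = a_i\int_I |h(t)|^p\,\mathrm{d}t\quad (0<p<\infty),\qquad \esssup_{x\in I_i}\big|h\big(L_i^{-1}(x)\big)\big| = \esssup_{t\in I}|h(t)|\quad (p=\infty),$$
together with the fact that $L_i^{-1}$, being affine and bi-Lipschitz, preserves Lebesgue measurability and sends null sets to null sets (so that $Tg$ depends neither on the representative of $g$ nor on the values on the finite set $\Delta$ where the branches of (\ref{RB}) meet).

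First I would verify that $T$ is a self-map of $\mathcal{L}^p(I)$. For $g\in\mathcal{L}^p(I)$ and each $i\in\mathbb{N}_{N-1}$, the restriction $(Tg)|_{I_i}=f|_{I_i}+(\alpha_i\circ L_i^{-1})\cdot\big((g-b)\circ L_i^{-1}\big)$ is measurable; since $g-b\in\mathcal{L}^p(I)$ the composite $(g-b)\circ L_i^{-1}$ lies in $\mathcal{L}^p(I_i)$ by the identity above, and multiplying by the essentially bounded factor $\alpha_i\circ L_i^{-1}$ keeps it in $\mathcal{L}^p(I_i)$; finally $f|_{I_i}\in\mathcal{L}^p(I_i)$. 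As the $I_i$ cover $I$ up to the null set $\Delta$, summing the $p$-th powers of the norms of the pieces (taking the maximum of the essential suprema when $p=\infty$) shows $Tg\in\mathcal{L}^p(I)$.

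For the contraction step, note $(Tg_1-Tg_2)|_{I_i}=(\alpha_i\circ L_i^{-1})\cdot\big((g_1-g_2)\circ L_i^{-1}\big)$, so for $p\in[1,\infty)$,
$$\|Tg_1-Tg_2\|_p^p=\sum_{i\in\mathbb{N}_{N-1}}\int_{I_i}\big|\alpha_i\big(L_i^{-1}(x)\big)\big|^p\,\big|(g_1-g_2)\big(L_i^{-1}(x)\big)\big|^p\,\mathrm{d}x\le\Big(\sum_{i\in\mathbb{N}_{N-1}}a_i\|\alpha_i\|_\infty^p\Big)\|g_1-g_2\|_p^p,$$
and taking $p$-th roots gives the Lipschitz constant $\big[\sum_i a_i\|\alpha_i\|_\infty^p\big]^{1/p}$, which is $<1$ by hypothesis. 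The range $p\in(0,1)$ is the very same chain of inequalities, now read as $d_p(Tg_1,Tg_2)\le\big(\sum_i a_i\|\alpha_i\|_\infty^p\big)\,d_p(g_1,g_2)$, a contraction since the factor is $<1$. For $p=\infty$, on each $I_i$ one has $\big|(Tg_1-Tg_2)(x)\big|\le\|\alpha_i\|_\infty\|g_1-g_2\|_\infty$ for a.e.\ $x$, hence $\|Tg_1-Tg_2\|_\infty\le\big(\max_i\|\alpha_i\|_\infty\big)\|g_1-g_2\|_\infty$. In each case the Banach fixed point theorem yields a unique $f^\alpha\in\mathcal{L}^p(I)$ with $Tf^\alpha=f^\alpha$, which is precisely the self-referential equation (\ref{Sain2}). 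The computation is routine; the only care needed is the bookkeeping — selecting the right contraction regime (the norm itself versus its $p$-th power) across the three ranges of $p$ and checking well-definedness modulo null sets — so I anticipate no real obstacle.
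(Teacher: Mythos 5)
Your proposal is correct and follows essentially the same route as the paper: the self-map check plus the change-of-variables computation $\int_{I_i}|h(L_i^{-1}(x))|^p\,\mathrm{d}x=a_i\int_I|h|^p$ giving the Lipschitz constant $\big[\sum_i a_i\|\alpha_i\|_\infty^p\big]^{1/p}$ for $1\le p<\infty$. The paper dismisses the $p=\infty$ and $0<p<1$ cases as ``similar''; your explicit treatment of the quasi-Banach range via the complete metric $d_p(g_1,g_2)=\|g_1-g_2\|_p^p$ is exactly the intended completion of that remark, not a different argument.
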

\begin{proof}
It follows at once from the hypotheses on the function $f$, affinities $L_i$, scaling functions $\alpha_i$, and base function $b$ that
the  operator $T$ is a well-defined self-map on $\mathcal{L}^p(I)$. It remains to prove that under the stated conditions on scaling functions, $T$ is a contraction. Let $g_1, g_2 \in \mathcal{L}^p(I)$ and $1 \le p < \infty$.
\begin{equation*}
\begin{split}
\|Tg_1-Tg_2\|_p^p = &~ \int \limits_{I} \big|(Tg_1-Tg_2)(x)\big|^p ~\mathrm{d}x\\
=~& \underset {i \in \mathbb{N}_{N-1}}\sum \int \limits_{I_i} \Big|   \alpha_i \big(L_i^{-1}(x)\big)\big[g_1\big(L_i^{-1}(x)\big)-g_2\big(L_i^{-1}(x)\big)\big]\Big|^p ~\mathrm{d}x\\
=~& \underset {i \in \mathbb{N}_{N-1}}\sum a_i \int \limits_{I}\Big| \alpha_i(\tilde{x})\big[g_1(\tilde{x})-g_2(\tilde{x})\big]\Big|^p~\mathrm{d}\tilde{x}\\
\le~& \underset {i \in \mathbb{N}_{N-1}}\sum a_i \|\alpha_i\|_\infty^p \int \limits_{I} \big|g_1(\tilde{x})-g_2(\tilde{x})\big|^p ~\mathrm{d}\tilde{x}\\
=~& \Big[\underset {i \in \mathbb{N}_{N-1}}\sum a_i \|\alpha_i\|_\infty^p \Big] \|g_1-g_2\|_p^p.
\end{split}
\end{equation*}
In the third step of the preceding analysis we used the change of variable $\tilde{x}=L_i^{-1}(x)$, whereas the rest follows directly by computations.
Hence, we see that for $\Big[\sum \limits_{i \in \mathbb{N}_{N-1}} a_i \|\alpha_i\|_\infty^p\Big]^{1/p}<1$ the map $T$ is a contraction. Proof is similar for the remaining cases.
\end{proof}
Recall that, in general, the $\alpha$-fractal function has noninteger Hausdorff and Minkowski dimensions that depend on the scale vector $\alpha$ and hence the map $f \mapsto f^\alpha$ is a ``roughing" operation. In the following theorem we prove that by appropriate choices of scaling functions and base function, the order of continuity of $f$ can be preserved in $f^\alpha$. This extends the construction of smooth fractal function enunciated in \cite{NS1} with the aid of Barnsley-Harrington (BH) theorem \cite{BH}. It is to be stressed that BH theorem does not rescue us in the setting of variable scaling.
\begin{theorem}\label{SAINdiffthm}
Let $f \in \mathcal{C}^k(I)$, where $k \in \mathbb{N}$. Suppose that $\Delta=\{x_1,x_2,\dots, x_N\}$ is a partition of $I$ satisfying $x_1<x_2<\dots<x_N$, $I_i:=[x_i,x_{i+1}]$ for $i \in \mathbb{N}_{N-1}$ and $L_i:I \rightarrow I_i$ are affine maps $L_i(x)=a_i x+d_i$ satisfying $L_i(x_1)=x_i$ and $L_i(x_N)=x_{i+1}$ for $i\in \mathbb{N}_{N-1}$. Suppose that $k$-times continuously differentiable scaling functions and base function are selected so that
$$\|\alpha_i\|_{\mathcal{C}^k} \le (\frac{a_i}{2})^k; ~b^{(r)}(x_1)=f^{(r)}(x_1),~ b^{(r)}(x_N)= f^{(r)}(x_N), ~i \in \mathbb{N}_{N-1}, ~r \in \mathbb{N}_k \cup \{0\}.$$ Then the RB operator defined in (\ref{RB}) is a contraction on
the complete metric space $$\mathcal{C}_{f}^k(I) := \big\{g \in \mathcal{C}^k(I): g^{(r)}(x_1)=f^{(r)}(x_1),~g^{(r)}(x_N)=f^{(r)}(x_N),~ r \in \mathbb{N}_k \cup \{0\}\big\}.$$
 Furthermore, the derivative $(f^\alpha)^{(r)}$  of its unique fixed point $f^\alpha$ satisfies the self-referential equation
$$(f^ \alpha)^{(r)}(x)=f ^{(r)}(x)+ a_i^{-r} \Big[ \sum_{j=0}^{r} {r \choose j} \alpha_i^{(r-j)}\big(L_i^{-1}(x)\big) (f^ \alpha-b)^{(j)}\big(L_i^{-1}(x)\big)\Big], ~~ x \in I_i, ~i \in \mathbb{N}_{N-1},$$ and consequently, $f^\alpha$ agrees with $f$ at the knot points up to $k$-th derivative.
\end{theorem}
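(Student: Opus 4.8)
The plan is to run the Banach fixed point theorem on $\mathcal{C}_f^k(I)$, which is complete because the constraints $g^{(r)}(x_1)=f^{(r)}(x_1)$ and $g^{(r)}(x_N)=f^{(r)}(x_N)$ are closed in the $\mathcal{C}^k$ topology. The computational engine throughout is the formula for the $r$-th derivative of $Tg$ on (the interior of) a piece $I_i$: since $L_i^{-1}(x)=(x-d_i)/a_i$ is affine with $(L_i^{-1})'=a_i^{-1}$, the Leibniz rule gives, for $0\le r\le k$,
\begin{equation*}
(Tg)^{(r)}(x)=f^{(r)}(x)+a_i^{-r}\sum_{j=0}^{r}\binom{r}{j}\,\alpha_i^{(r-j)}\big(L_i^{-1}(x)\big)\,(g-b)^{(j)}\big(L_i^{-1}(x)\big),\qquad x\in I_i.
\end{equation*}
Since $f,\alpha_i,b,g$ are $\mathcal{C}^k$ and $L_i^{-1}$ is affine, $Tg$ is $\mathcal{C}^k$ on the interior of each $I_i$ and this formula extends continuously to the endpoints.

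First I would show $T$ is a self-map of $\mathcal{C}_f^k(I)$. The observation used repeatedly is that the hypothesis $b^{(r)}(x_1)=f^{(r)}(x_1)$, $b^{(r)}(x_N)=f^{(r)}(x_N)$ together with $g\in\mathcal{C}_f^k(I)$ forces $(g-b)^{(j)}(x_1)=(g-b)^{(j)}(x_N)=0$ for $0\le j\le k$. Evaluating the derivative formula at $x_1\in I_1$ (where $L_1^{-1}(x_1)=x_1$) and at $x_N\in I_{N-1}$ (where $L_{N-1}^{-1}(x_N)=x_N$), the correction terms vanish, so $(Tg)^{(r)}(x_1)=f^{(r)}(x_1)$ and $(Tg)^{(r)}(x_N)=f^{(r)}(x_N)$. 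At an interior knot $x_{i+1}$, approaching within $I_i$ one has $L_i^{-1}(x_{i+1})=x_N$ and approaching within $I_{i+1}$ one has $L_{i+1}^{-1}(x_{i+1})=x_1$; in both cases the correction terms vanish, so the one-sided $r$-th derivatives both equal $f^{(r)}(x_{i+1})$. By the standard lemma on gluing $\mathcal{C}^k$ pieces whose derivatives up to order $k$ agree at a common endpoint, $Tg\in\mathcal{C}^k(I)$, and the endpoint identities give $Tg\in\mathcal{C}_f^k(I)$.

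The contraction estimate is the delicate point. For $g_1,g_2\in\mathcal{C}_f^k(I)$ the base-function terms cancel, so $(Tg_1-Tg_2)^{(r)}(x)=a_i^{-r}\sum_{j=0}^{r}\binom{r}{j}\alpha_i^{(r-j)}(L_i^{-1}(x))(g_1-g_2)^{(j)}(L_i^{-1}(x))$ on $I_i$. Using $\|\alpha_i^{(r-j)}\|_\infty\le\|\alpha_i\|_{\mathcal{C}^k}\le(a_i/2)^k$ and $\|(g_1-g_2)^{(j)}\|_\infty\le\|g_1-g_2\|_{\mathcal{C}^k}$, together with $\sum_{j=0}^r\binom{r}{j}=2^r$, one gets $|(Tg_1-Tg_2)^{(r)}(x)|\le a_i^{k-r}2^{r-k}\|g_1-g_2\|_{\mathcal{C}^k}$, which is strictly below $1$ for $r<k$ but only $\le 1$ at the top order $r=k$; thus the crude bound shows $T$ is merely non-expansive in $\|\cdot\|_{\mathcal{C}^k}$. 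To obtain a genuine contraction I would use the equivalent norm $\|g\|_*:=\max_{0\le r\le k}2^{-r}\|g^{(r)}\|_\infty$, so that $2^{-k}\|g\|_{\mathcal{C}^k}\le\|g\|_*\le\|g\|_{\mathcal{C}^k}$ and $\mathcal{C}_f^k(I)$ stays complete. Then $\|(g_1-g_2)^{(j)}\|_\infty\le 2^j\|g_1-g_2\|_*$, and the sum $\sum_{j=0}^r\binom{r}{j}2^j=3^r$ now appears, giving $2^{-r}|(Tg_1-Tg_2)^{(r)}(x)|\le(a_i/2)^{k-r}(3/4)^r\|g_1-g_2\|_*\le(3/4)^k\|g_1-g_2\|_*$ (using $a_i/2<3/4$), hence $\|Tg_1-Tg_2\|_*\le(3/4)^k\|g_1-g_2\|_*$ with $(3/4)^k\in(0,1)$. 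Equivalently, a direct iteration shows $T^2$ is already a contraction in $\|\cdot\|_{\mathcal{C}^k}$ with the same factor. The Banach fixed point theorem then yields the unique $f^\alpha\in\mathcal{C}_f^k(I)$.

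Finally, substituting $g=f^\alpha$ into the derivative formula and using $Tf^\alpha=f^\alpha$ gives exactly the asserted self-referential equation for $(f^\alpha)^{(r)}$, $r\in\mathbb{N}_k\cup\{0\}$. For the knot-matching: at $x_1$ and $x_N$ it is membership in $\mathcal{C}_f^k(I)$; at an interior knot $x_{i+1}$, evaluate the self-referential equation on $I_{i+1}$, where $L_{i+1}^{-1}(x_{i+1})=x_1$ and $(f^\alpha-b)^{(j)}(x_1)=0$ for $0\le j\le k$, so $(f^\alpha)^{(r)}(x_{i+1})=f^{(r)}(x_{i+1})$ for all $r\le k$. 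I expect the contraction step — the degeneration of the crude estimate at the top-order derivative, forcing the passage to the rescaled metric (or the $T^2$ argument) — to be the only real obstacle; the rest is bookkeeping with the Leibniz formula and the endpoint-matching hypotheses on $b$.
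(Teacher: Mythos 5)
Your argument is correct and follows the paper's skeleton --- the Leibniz formula for $(Tg)^{(r)}$ on each $I_i$, matching of the one-sided derivatives at the knots via $L_i^{-1}(x_{i+1})=x_N$, $L_{i+1}^{-1}(x_{i+1})=x_1$ and the vanishing of $(g-b)^{(j)}$ at $x_1,x_N$, then the Banach fixed point theorem --- but it genuinely diverges at the contraction step. The paper estimates $\big|(Tg_1)^{(r)}(x)-(Tg_2)^{(r)}(x)\big|\le a_i^{-r}2^r\|\alpha_i\|_{\mathcal{C}^r}\|g_1-g_2\|_{\mathcal{C}^r}$ and concludes $\|Tg_1-Tg_2\|_{\mathcal{C}^k}\le \max_i\{(2/a_i)^k\|\alpha_i\|_{\mathcal{C}^k}\}\,\|g_1-g_2\|_{\mathcal{C}^k}$, then declares $T$ a contraction; under the hypothesis as written, $\|\alpha_i\|_{\mathcal{C}^k}\le (a_i/2)^k$ with non-strict inequality, that factor is only $\le 1$, which is precisely the top-order degeneration you identify (the paper implicitly reads the hypothesis as strict). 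Your weighted norm $\|g\|_*=\max_{0\le r\le k}2^{-r}\|g^{(r)}\|_\infty$ repairs this: the computation $2^{-r}|(Tg_1-Tg_2)^{(r)}(x)|\le (a_i/2)^{k-r}(3/4)^r\|g_1-g_2\|_*\le(3/4)^k\|g_1-g_2\|_*$ is correct and gives a uniform contraction factor $(3/4)^k<1$, so you prove the theorem verbatim (including the borderline equality case) at the negligible cost of a renorming, with completeness of $\mathcal{C}_f^k(I)$ preserved by norm equivalence; the paper's route is shorter but needs the strict reading of the hypothesis. The one quibble is your parenthetical claim that $T^2$ is already a contraction in $\|\cdot\|_{\mathcal{C}^k}$ ``with the same factor'': norm equivalence only yields $\|T^ng_1-T^ng_2\|_{\mathcal{C}^k}\le 2^k(3/4)^{nk}\|g_1-g_2\|_{\mathcal{C}^k}$, which requires $2(3/4)^n<1$, i.e.\ $n\ge 3$, and a direct expansion of $T^2$ is not carried out; this aside is dispensable, since either the renormed argument or the standard fixed-point theorem for an iterate of $T$ finishes the proof, and the remaining steps (self-referential equation by inserting $g=f^\alpha$, knot agreement up to order $k$) coincide with the paper's.
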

\begin{proof}
By the continuity conditions imposed on the maps $\alpha_i$ and $b$, it follows at once that $Tg$ is continuous on each subinterval $I_i$.
By the Leibnitz rule of differentiation we obtain
$$ (Tg)^{(r)} (x) = f^{(r)}(x) + a_i^{-r} \Big[ \sum_{j=0}^{r} {r \choose j} \alpha_i^{(r-j)}\big(L_i^{-1}(x)\big) (g-b)^{(j)}\big(L_i^{-1}(x)\big)\Big], \quad x \in I_i, \quad i \in \mathbb{N}_{N-1}.$$
Using  $L_i^{-1}(x_i)=x_1$ and $L_{i-1}^{-1}(x_i)=x_N$ we infer that
$$(Tg)^{(r)}(x_i^+) = (Tg)^{(r)} (x_i^-)=f^{(r)}(x_i), ~i=2,3,\dots, N-1,~ r \in \mathbb{N}_k \cup \{0\}.$$
Further, it is plain to see that $$(Tg)^{(r)}(x_1)= f^{(r)}(x_1), \quad (Tg)^{(r)}(x_N)= f^{(r)}(x_N) \quad \text{for} \quad r \in \mathbb{N}_k \cup \{0\}.$$
Consequently, $Tg \in \mathcal{C}_{f}^k(I)$ and $T$ is a self-map.\\
We will now demonstrate that  $T$ is a contraction map. For $g_1,g_2 \in \mathcal{C}_f^k(I)$, we obtain
$$ \big|(Tg_1)^{(r)} (x) - (Tg_2)^{(r)}(x) \big| \le a_i^{-r}2^r \|\alpha_i\|_{\mathcal{C}^r} \|g_1-g_2\|_{\mathcal{C}^r},\quad  x \in I_i, \quad i \in \mathbb{N}_{N-1}, \quad r\in \mathbb{N}_k \cup \{0\}.$$
The previous inequality implies
$$\|Tg_1 -Tg_2\|_{\mathcal{C}^k} \le \max \Big\{(\frac{2}{a_i})^k \|\alpha_i\|_{\mathcal{C}^k}: i \in \mathbb{N}_{N-1}\Big\} \|g_1-g_2\|_{\mathcal{C}^k}.$$
The assumption on the scaling functions now ensure that $T$ is a contraction map, and, hence by the Banach fixed point theorem, $T$ has a unique  fixed point $f^\alpha$. Further, it follows that $(f^\alpha)^{(r)}$ obeys the functional equation given in the statement and  $(f^\alpha)^{(r)}(x_i)=f^{(r)}(x_i)$ for $r\in \mathbb{N}_k \cup \{0\}$ and $i \in \mathbb{N}_N$, completing the proof.
\end{proof}
To ease the exposition, we assume constant scaling function $\alpha_i(x)=\alpha_i$ for all $x \in I$ in the following theorem. However, we note in passing that one can handle the setting of scaling functions in a similar manner by using the following Leibnitz formula for weak derivatives which states \cite{EV}:\\
Assume $u \in \mathcal{W}^{k,p}(I)$ and $\varphi$ is infinitely differentiable. Then $\varphi u \in \mathcal{W}^{k,p}(I)$ and for $j \le k$
$$D^j (\varphi u) = \sum_{r=0}^{j} {j \choose r}D^r \varphi D^{j-r} u.$$
To prepare for the next theorem let us recall the chain rule for weak derivative adapted to our situation: \\
Let $u \in \mathcal{W}^{1,1}(I)$ and $\varphi: V \subset \mathbb{R} \to I$ be diffeomorphism such that  $\varphi$ and $(\varphi^{-1})'$ are bounded. Then $u \circ \varphi \in \mathcal{W}^{1,1}(V)$ and $D (u \circ \varphi) = (Du \circ \varphi). D \varphi$.
\begin{theorem}\label{FFSS}
Let $f \in \mathcal{W}^{k,p}(I)$. Suppose that $\Delta=\{x_1,x_2,\dots, x_N\}$ is a partition of~ $I$ with increasing abscissae, $I_i:=[x_i,x_{i+1})$ for $i \in \mathbb{N}_{N-2}$ and $I_{N-1}:=[x_{N-1}, x_N]$.
Let $L_i:I \rightarrow I_i$ be  affine maps $L_i(x)=a_i x+d_i$ satisfying $L_i(x_1)=x_i$ and $L_i(x_N^-)=x_{i+1}$ for $i\in \mathbb{N}_{N-1}$.
Assume that $b \in \mathcal{W}^{k,p}(I)$ and the scaling factors satisfy
\begin{equation*}
\left\{\begin{array}{rllll}
& \Big[\sum \limits_{i \in \mathbb{N}_{N-1}} \frac{|\alpha_i|^p}{a_i^{kp-1}} \Big]^{1/p}<1 , \; \text{for} ~~ p \in [1, \infty),\\
& \underset{i \in \mathbb{N}_{N-1}}\max~ \{\frac{|\alpha_i|}{a_i^k}\} <1 , \; \; \; \;\; \;\;\;\;\;\;\text{for} ~~p =\infty.
\end{array}\right.
\end{equation*}
Then the RB operator (\ref{RBCS}) is a contraction map  on $\mathcal{W}^{k,p}(I)$ and the unique fixed point $f^\alpha \in \mathcal{W}^{k,p}(I)$ obeys the self-referential equation (\ref{ICAFWeq4}).
\end{theorem}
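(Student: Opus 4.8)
The strategy is to realise $f^\alpha$ as the fixed point of $T$ via the Banach fixed point theorem, exactly as in Theorems~\ref{SAINdiffthm} and~\ref{RRAFIFthm1}, but now with every derivative understood in the weak sense. Fix $g \in \mathcal{W}^{k,p}(I)$. On each piece $I_i$ the operator~(\ref{RBCS}) reads $(Tg)(x) = f(x) + \alpha_i (g-b)(L_i^{-1}(x))$, where $L_i^{-1}$ is an affine diffeomorphism of $I_i$ onto $I$ with constant derivative $a_i^{-1}$; applying the quoted chain rule for weak derivatives with $\varphi = L_i^{-1}$ and iterating it (at the $j$-th stage to $u = D^{j-1}(g-b) \in \mathcal{W}^{1,p}(I) \subset \mathcal{W}^{1,1}(I)$) gives
\begin{equation*}
D^j(Tg)(x) = D^j f(x) + \frac{\alpha_i}{a_i^{\,j}}\big(D^j(g-b)\big)\big(L_i^{-1}(x)\big), \qquad x \in I_i, \quad 0 \le j \le k .
\end{equation*}
Each such piece lies in $\mathcal{L}^p(I_i)$. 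The delicate point --- and, I expect, the main obstacle --- is to conclude that the function so defined piecewise really is the $j$-th weak derivative of $Tg$ on the \emph{whole} of $I$: one must rule out singular contributions at the interior knots $x_2,\dots,x_{N-1}$, i.e. verify that the lower-order pieces $D^r(Tg)$, $0 \le r \le k-1$, glue continuously there. Using $L_i^{-1}(x_i) = x_1$ and $L_{i-1}^{-1}(x_i^-) = x_N$, the two one-sided values at $x_i$ are $D^r f(x_i) + \alpha_i D^r(g-b)(x_1)$ and $D^r f(x_i) + \alpha_{i-1} D^r(g-b)(x_N)$, so agreement holds once $D^r(g-b)$ vanishes at both endpoints for $0 \le r \le k-1$. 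As in Theorem~\ref{SAINdiffthm} this forces us to run the argument on the closed --- hence complete --- subspace
\begin{equation*}
\mathcal{W}_f^{k,p}(I) := \big\{ g \in \mathcal{W}^{k,p}(I) : D^r g(x_1) = D^r f(x_1),\ D^r g(x_N) = D^r f(x_N),\ 0 \le r \le k-1 \big\},
\end{equation*}
with $b$ assumed to meet $f$ at the endpoints to the same order (the Sobolev embedding $\mathcal{W}^{k,p}(I) \hookrightarrow \mathcal{C}^{k-1}(\bar I)$ makes these endpoint conditions meaningful); then $D^r(g-b)(x_1) = D^r(g-b)(x_N) = 0$, the gluing is automatic, and $Tg \in \mathcal{W}_f^{k,p}(I)$.

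The contraction estimate is then a routine computation. Let $g_1, g_2 \in \mathcal{W}_f^{k,p}(I)$ and $1 \le p < \infty$. Subtracting, the $D^j f$ terms cancel, and the substitution $\tilde x = L_i^{-1}(x)$ (contributing a Jacobian factor $a_i$) yields, for each $0 \le j \le k$,
\begin{equation*}
\|D^j(Tg_1) - D^j(Tg_2)\|_p^p = \sum_{i \in \mathbb{N}_{N-1}} \frac{|\alpha_i|^p}{a_i^{\,jp}}\, a_i \int_I \big| D^j(g_1-g_2)(\tilde x) \big|^p \,\mathrm{d}\tilde x = \sum_{i \in \mathbb{N}_{N-1}} \frac{|\alpha_i|^p}{a_i^{\,jp-1}}\, \|D^j(g_1-g_2)\|_p^p .
\end{equation*}
Since each $a_i = (x_{i+1}-x_i)/(x_N-x_1) \in (0,1)$ and $j \le k$, one has $a_i^{\,jp-1} \ge a_i^{\,kp-1}$, hence $a_i^{\,1-jp} \le a_i^{\,1-kp}$; summing over $j = 0,1,\dots,k$ therefore gives
\begin{equation*}
\|Tg_1 - Tg_2\|_{\mathcal{W}^{k,p}}^p = \sum_{j=0}^{k} \|D^j(Tg_1) - D^j(Tg_2)\|_p^p \le \Big(\sum_{i \in \mathbb{N}_{N-1}} \frac{|\alpha_i|^p}{a_i^{\,kp-1}}\Big) \|g_1-g_2\|_{\mathcal{W}^{k,p}}^p ,
\end{equation*}
so $T$ is a contraction with ratio $\big[\sum_i |\alpha_i|^p a_i^{\,1-kp}\big]^{1/p} < 1$ by hypothesis.

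For $p = \infty$ the same scheme applies: from the piecewise formula one gets $\|D^j(Tg_1)-D^j(Tg_2)\|_\infty = \max_i \frac{|\alpha_i|}{a_i^{\,j}}\|D^j(g_1-g_2)\|_\infty$, and, using $a_i^{\,j} \ge a_i^{\,k}$, summing over $j = 0,\dots,k$ gives $\|Tg_1-Tg_2\|_{\mathcal{W}^{k,\infty}} \le \big(\max_i \frac{|\alpha_i|}{a_i^{\,k}}\big)\|g_1-g_2\|_{\mathcal{W}^{k,\infty}}$, again a contraction by hypothesis. The Banach fixed point theorem then furnishes a unique $f^\alpha \in \mathcal{W}_f^{k,p}(I) \subset \mathcal{W}^{k,p}(I)$ with $Tf^\alpha = f^\alpha$, and inserting this identity into the definition~(\ref{RBCS}) of $T$ is precisely the self-referential equation~(\ref{ICAFWeq4}). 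The only step requiring genuine care, as noted, is the patching of the weak derivatives across the knots; once that is in place, everything else reduces to the change-of-variables identity above and the elementary inequality $a_i < 1$.
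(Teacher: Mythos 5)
Your contraction estimates coincide with the paper's own computation: the same change of variables $\tilde{x}=L_i^{-1}(x)$ with Jacobian $a_i$, the same identity $\|D^j(Tg_1-Tg_2)\|_p^p=\sum_i |\alpha_i|^p a_i^{1-jp}\,\|D^j(g_1-g_2)\|_p^p$, the same elementary bound $a_i^{1-jp}\le a_i^{1-kp}$ (valid since $0<a_i<1$, $k\ge 1$, $p\ge 1$), and the analogous maximum estimate for $p=\infty$. Where you genuinely differ is on well-definedness. The paper disposes of this in one line (``from the stated assumptions \dots\ $T$ is a well-defined self-map on $\mathcal{W}^{k,p}(I)$'') and proves contractivity on the whole space, whereas you observe that for $k\ge 1$ the piecewise definition produces at an interior knot the two one-sided values $D^rf(x_i)+\alpha_i D^r(g-b)(x_1)$ and $D^rf(x_i)+\alpha_{i-1}D^r(g-b)(x_N)$ for $0\le r\le k-1$, so that without endpoint matching $Tg$ generically has jumps, its distributional derivatives acquire singular (Dirac) contributions, and $Tg\notin \mathcal{W}^{k,p}(I)$. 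Your remedy --- requiring $D^r b$ to agree with $D^r f$ at $x_1,x_N$ for $0\le r\le k-1$ and running the Banach fixed point theorem on the closed subspace $\mathcal{W}_f^{k,p}(I)$ (closed because the evaluations $g\mapsto D^rg(x_1),D^rg(x_N)$, $r\le k-1$, are continuous via $\mathcal{W}^{k,p}(I)\hookrightarrow \mathcal{C}^{k-1}(I)$) --- mirrors exactly what the paper itself does in the $\mathcal{C}^k$ and H\"older cases (Theorems \ref{SAINdiffthm} and \ref{FFSH}) and is what the later results such as Proposition \ref{Prop1} implicitly rely on. So strictly speaking you prove a slightly amended statement (extra endpoint hypotheses on $b$, contraction on $\mathcal{W}_f^{k,p}(I)$ rather than on all of $\mathcal{W}^{k,p}(I)$), but this amendment repairs a real lacuna in the paper's self-map claim rather than constituting a gap in your argument; the remaining ingredients (iterated affine chain rule for weak derivatives, uniqueness of the fixed point, and the self-referential equation (\ref{ICAFWeq4})) are handled correctly and as in the paper.
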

\begin{proof}
From the stated assumptions on the scaling factors and base function it follows that $T$ is a well-defined self-map on $\mathcal{W}^{k,p}(I)$. Next we shall show that $T$ is a contraction. \\
Following the proof of Theorem \ref{RRAFIFthm1}, for $1 \le p <\infty$, we obtain
$$\|Tg_1-Tg_2\|_p \le \Big[\sum_{i \in \mathbb{N}_{N-1}} a_i |\alpha_i|^p\Big]^{\frac{1}{p}} \|g_1-g_2\|_p.$$
For $1\le j \le k$, straightforward computations yield
\begin{equation*}
\begin{split}
\|D^j(Tg_1)-D^j(Tg_2)\|_p ^p = &~ \int_{I} \big|D^j(Tg_1) (x)- D^j(Tg_2) (x)\big|^p ~\mathrm{d}x\\
=&~ \sum_{i \in \mathbb{N}_{N-1}} \int_{I_i} \big|D^j[\alpha_i (g_1-g_2)\circ L_i^{-1}(x)]\big|^p  ~\mathrm{d}x\\
=& ~\sum_{i \in \mathbb{N}_{N-1}} |\alpha_i|^p \frac{1}{a_i^{jp}} \int_{I_i}\big|D^j(g_1-g_2) \big(L_i^{-1}(x)\big)\big|^p  ~\mathrm{d}x\\
=& ~\sum_{i \in \mathbb{N}_{N-1}} |\alpha_i|^p \frac{1}{a_i^{jp-1}}\int_{I} \big|D^j(g_1-g_2)(\tilde{x})\big|^p ~\mathrm{d}\tilde{x}\\
=&~\Big[ \sum_{i \in \mathbb{N}_{N-1}} |\alpha_i|^p \frac{1}{a_i^{jp-1}} \Big]\|D^j(g_1-g_2)\|_p^p,
\end{split}
\end{equation*}
which implies that for $1 \le j \le k$,
$$\|D^j(Tg_1)-D^j(Tg_2)\|_p = \Big[ \sum_{i \in \mathbb{N}_{N-1}}  \frac{|\alpha_i|^p}{a_i^{jp-1}} \Big]^{\frac{1}{p}} \|D^j(g_1-g_2)\|_p.$$
Consequently, for $1 \le p < \infty$,
\begin{equation*}
\begin{split}
\|Tg_1-Tg_2\|_{\mathcal{W}^{k,p}} =&~ \Big[\sum_{j=0}^k  \|D^j(Tg_1-Tg_2)\|_p^p\Big]^{\frac{1}{p}} \\
=&~\Bigg [\sum_{j=0}^k  \Big[ \sum_{i \in \mathbb{N}_{N-1}}\frac{|\alpha_i|^p}{a_i^{jp-1}} \Big]\|D^j(g_1-g_2)\|_p^p\Bigg]^{\frac{1}{p}}.
\end{split}
\end{equation*}
Bearing in mind that $0<a_i<1$, $1 \le p < \infty$, and $k \ge1$, we have $$ \frac{1}{a_i^{jp-1}} \le \frac{1}{a_i ^{kp-1}} \quad   \text{for}\quad  0 \le j \le k.$$ Hence,
\begin{equation*}
\begin{split}
\|Tg_1-Tg_2\|_{\mathcal{W}^{k,p}} \le &~  \Big[\sum_{i \in \mathbb{N}_{N-1}}\frac{|\alpha_i|^p}{a_i^{kp-1}} \Big]^{\frac{1}{p}} \Bigg[\sum_{j=0}^k \|D^j(g_1-g_2)\|_p^p\Bigg]^{\frac{1}{p}}.\\
= &~ \Big[\sum_{i \in \mathbb{N}_{N-1}}\frac{|\alpha_i|^p}{a_i^{kp-1}} \Big]^{\frac{1}{p}}\|g_1-g_2\|_{\mathcal{W}^{k,p}}
\end{split}
\end{equation*}
Therefore, for $1 \le p < \infty$, the stated assumption on scaling factors ensures that $T$ is a contraction on $\mathcal{W}^{k,p}(I)$.\\
Next consider the case $p= \infty$. For $x \in I_i$, $i \in \mathbb{N}_{N-1}$
\begin{equation*}
\begin{split}
\big|(Tg_1)(x)-(Tg_2) (x)\big| = &~\big|\alpha_i (g_1-g_2)\big(L_i^{-1}(x)\big)\big|\\
\le &~|\alpha_i| \|g_1-g_2\|_\infty,
\end{split}
\end{equation*}
from which one can read
$$\|Tg_1-Tg_2\|_\infty \le \max \{|\alpha_i|:i \in \mathbb{N}_{N-1}\} \|g_1 -g_2\|_\infty.$$
Similarly, for $1 \le j \le k$,
\begin{equation*}
\begin{split}
\big|(D^j Tg_1) (x) - (D^j Tg_2) (x) \big| =&~ \big|\alpha_i D^j \big((g_1-g_2) \circ L_i^{-1} (x)\big)\big | \\
=&~ |\alpha_i| \frac{1}{a_i^j} \big|D^j(g_1-g_2) \circ L_i^{-1} (x)\big|\\
\le &~ \frac{|\alpha_i|}{a_i^j} \|D^j(g_1-g_2)\|_\infty,
\end{split}
\end{equation*}
which asserts
$$\|D^j(Tg_1)-D^j(Tg_2)\|_ \infty \le \max \big\{\frac{|\alpha_i|}{a_i^j}: i\in \mathbb{N}_{N-1}\big\} \|D^j(g_1-g_2)\|_\infty.$$
Thus we have
\begin{equation*}
\begin{split}
\|Tg_1-Tg_2\|_{\mathcal{W}^{k,\infty}} =&~ \sum_{j=0}^k \|D^j(Tg_1-Tg_2)\|_\infty \\
\le &~ \max \Big \{ \max \big\{\frac{|\alpha_i|}{a_i^j}: i\in \mathbb{N}_{N-1}\big\}: j \in \mathbb{N}_k \cup \{0\}\Big\} \|g_1-g_2\|_{\mathcal{W}^{k,p}}\\
=&~\max \Big\{\frac{|\alpha_i|}{a_i^k}: i \in \mathbb{N}_{N-1}\Big\} \|g_1-g_2\|_{\mathcal{W}^{k,p}},
\end{split}
\end{equation*}
which with prescribed conditions on the scaling factors implies that $T$ is a contraction.
\end{proof}
\begin{theorem}\label{FFSH}
Let $ f\in \mathcal{C}^{k,\sigma}(I)$. Suppose that $\Delta=\{x_1,x_2,\dots, x_N\}$ is a partition of~ $I$ satisfying $x_1<x_2<\dots<x_N$, $I_i:=[x_i,x_{i+1}]$ for $i \in \mathbb{N}_{N-1}$ and $L_i:I \rightarrow I_i$ are affine maps $L_i(x)=a_i x+d_i$ satisfying $L_i(x_1)=x_i$ and $L_i(x_N)=x_{i+1}$ for $i\in \mathbb{N}_{N-1}$. Let the base function $b \in \mathcal{C}^{k,\sigma}(I)$ and the scaling factors $\alpha_i$ satisfy
$$\max\Big\{\frac{|\alpha_i|}{a_i^{\sigma+k}}: i \in \mathbb{N}_{N-1}\Big\} <1,$$
$$ b^{(r)}(x_1)=f^{(r)}(x_1),~ b^{(r)}(x_N)= f^{(r)}(x_N), ~r \in \mathbb{N}_k \cup \{0\}.$$
Then the RB operator $T$ defined in (\ref{RBCS}) is a contraction map  on $\mathcal{C}_f^{k,\sigma}(I) \subset \mathcal{C}^{k,\sigma}(I)$ defined by
$$\mathcal{C}_f^{k,\sigma}(I)= \big\{ g \in \mathcal{C}^{k,\sigma}(I): g^{(r)}(x_1) = f^{(r)}(x_1),~ g^{(r)}(x_N) = f^{(r)}(x_N),~ 0 \le r \le k\big\}$$
and the unique fixed point $f^\alpha \in \mathcal{C}^{k,\sigma}(I)$ obeys the self-referential equation (\ref{ICAFWeq4}).
\end{theorem}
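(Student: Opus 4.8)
The plan is to follow the template of Theorems~\ref{SAINdiffthm} and~\ref{FFSS}: first check that $T$ maps the complete metric space $\mathcal{C}_f^{k,\sigma}(I)$ into itself, then establish that it is a contraction there, and finally invoke the Banach fixed point theorem; the self-referential equation~(\ref{ICAFWeq4}) for $f^\alpha$ and the nodal matching of $f^\alpha$ with $f$ up to order $k$ then follow immediately, exactly as in Theorem~\ref{SAINdiffthm}.

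For the self-map step, fix $g\in\mathcal{C}_f^{k,\sigma}(I)$. On each subinterval $I_i$ one has $Tg=f+\alpha_i\,(g-b)\circ L_i^{-1}$, so (the $\alpha_i$ being constants here) $(Tg)^{(r)}(x)=f^{(r)}(x)+\alpha_i\,a_i^{-r}\,(g-b)^{(r)}\big(L_i^{-1}(x)\big)$ for $x\in I_i$ and $r\in\mathbb{N}_k\cup\{0\}$. Using $L_i^{-1}(x_i)=x_1$, $L_{i-1}^{-1}(x_i)=x_N$ together with the hypotheses $b^{(r)}(x_1)=f^{(r)}(x_1)$, $b^{(r)}(x_N)=f^{(r)}(x_N)$ and the membership conditions $g^{(r)}(x_1)=f^{(r)}(x_1)$, $g^{(r)}(x_N)=f^{(r)}(x_N)$, one argues verbatim as in Theorem~\ref{SAINdiffthm} that the one-sided derivatives of $Tg$ up to order $k$ coincide at every node and in fact equal $f^{(r)}(x_i)$ there; hence $Tg\in\mathcal{C}_f^k(I)$. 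It then remains to see that $(Tg)^{(k)}$ is H\"{o}lder continuous with exponent $\sigma$ on all of $I$: on each $\overline{I_i}$ it is the sum of $f^{(k)}$ and a constant multiple of $(g-b)^{(k)}\circ L_i^{-1}$, both H\"{o}lder-$\sigma$ (precomposition with the affinity $L_i^{-1}$ only multiplies the $\sigma$-seminorm by $a_i^{-\sigma}$), and a continuous function which is H\"{o}lder-$\sigma$ on each block of a finite partition is H\"{o}lder-$\sigma$ on the whole interval. Thus $Tg\in\mathcal{C}_f^{k,\sigma}(I)$.

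For the contraction step, let $g_1,g_2\in\mathcal{C}_f^{k,\sigma}(I)$ and put $\phi:=g_1-g_2$, so that $(Tg_1-Tg_2)^{(r)}(x)=\alpha_i\,a_i^{-r}\,\phi^{(r)}\big(L_i^{-1}(x)\big)$ on $I_i$. Since $0<a_i<1$ and $r\le k$, this yields $\|(Tg_1-Tg_2)^{(r)}\|_\infty\le\lambda_0\,\|\phi^{(r)}\|_\infty$ for every $r\in\mathbb{N}_k\cup\{0\}$, where $\lambda_0:=\max\{|\alpha_i|/a_i^{k+\sigma}:i\in\mathbb{N}_{N-1}\}$. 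For the top-order H\"{o}lder seminorm, a pair $x,y$ lying in a single $\overline{I_i}$ is controlled directly by the change of variables and gives a difference quotient at most $(|\alpha_i|/a_i^{k+\sigma})[\phi^{(k)}]_\sigma\le\lambda_0[\phi^{(k)}]_\sigma$; for $x\in I_i$, $y\in I_j$ with $i<j$ one inserts the intervening nodes $x_{i+1}\le x_j$, and since $(Tg_1-Tg_2)^{(k)}$ vanishes at every node (by the self-map step just carried out), the increment telescopes to $\big|(Tg_1-Tg_2)^{(k)}(x)-(Tg_1-Tg_2)^{(k)}(x_{i+1})\big|+\big|(Tg_1-Tg_2)^{(k)}(x_j)-(Tg_1-Tg_2)^{(k)}(y)\big|$, each summand estimated by $\lambda_0[\phi^{(k)}]_\sigma$ times a power of the relevant node-distance, and the subadditivity bound $s^\sigma+t^\sigma\le 2^{1-\sigma}(s+t)^\sigma$ (valid for $\sigma\le1$) combined with $(x_{i+1}-x)+(y-x_j)\le y-x$ repackages this as a constant multiple of $|x-y|^\sigma$. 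Collecting the sup-norm and seminorm contributions gives $\|Tg_1-Tg_2\|_{\mathcal{C}^{k,\sigma}}\le\lambda\,\|g_1-g_2\|_{\mathcal{C}^{k,\sigma}}$, and the assumption $\max_i|\alpha_i|/a_i^{k+\sigma}<1$ forces $\lambda<1$; the Banach fixed point theorem then produces the unique $f^\alpha\in\mathcal{C}_f^{k,\sigma}(I)$ satisfying~(\ref{ICAFWeq4}).

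The step I expect to be the main obstacle is precisely this cross-subinterval estimate for the $\sigma$-seminorm of the top derivative. In $\mathcal{C}^k(I)$ and $\mathcal{W}^{k,p}(I)$ the norm is a weighted maximum or $\ell^p$-sum of quantities that localize cleanly to the blocks $I_i$, whereas the H\"{o}lder seminorm genuinely couples points from different blocks, so a per-block bound alone does not directly yield a global contraction factor. The escape route is the nodal reproduction property of $T$ established in the first step, namely that $(Tg_1-Tg_2)^{(k)}$ is zero at every node, which lets one split any cross-block increment at a node into two single-block pieces and then recombine via the concavity of $s\mapsto s^\sigma$. One should verify that the constant produced in this way, once multiplied into $\lambda_0$, still stays below $1$ under the stated hypothesis; should a residual factor depending only on $\sigma$ remain, it is harmless, since it can be absorbed by replacing the H\"{o}lder norm with an equivalent one, the Banach fixed point theorem needing only completeness and contractivity in some compatible metric.
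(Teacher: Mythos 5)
Your overall strategy coincides with the paper's: show $T$ maps $\mathcal{C}_f^{k,\sigma}(I)$ into itself, show it is Lipschitz on that space with the constant coming from $\lambda_0:=\max_i |\alpha_i|/a_i^{k+\sigma}$, and invoke Banach. Your self-map step and the per-derivative sup-norm estimates are fine. Where you depart from the paper is exactly the point you flag: pairs $x\in I_i$, $y\in I_j$ with $i\neq j$ in the H\"older seminorm. The paper's proof simply writes $[(Tg)^{(k)}]_\sigma=\max_i\sup_{x,y\in I_i}(\cdots)$, i.e.\ it silently discards cross-subinterval pairs; that identity is not valid in general (for $h(t)=\mathrm{sgn}(t)|t|^\sigma$ near a node, the global seminorm exceeds the per-block maximum by the factor $2^{1-\sigma}$, even though $h$ vanishes at the node), so you have correctly located a soft spot in the published argument. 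Your telescoping at the knots, using that $(Tg_1-Tg_2)^{(k)}$ vanishes there because $g_1,g_2\in\mathcal{C}_f^{k,\sigma}(I)$, is the right way to control these pairs, and it yields $[(Tg_1-Tg_2)^{(k)}]_\sigma\le 2^{1-\sigma}\lambda_0\,[(g_1-g_2)^{(k)}]_\sigma$.

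The gap is in how you close this step. You first assert that collecting the estimates gives a Lipschitz constant $\lambda<1$ under the stated hypothesis; it does not --- what you actually obtain is $\max\{\lambda_0,\,2^{1-\sigma}\lambda_0\}=2^{1-\sigma}\lambda_0$, which the hypothesis $\lambda_0<1$ does not force below $1$. Your fallback, absorbing the residual factor ``by replacing the H\"older norm with an equivalent one,'' is unjustified as stated: the factor $2^{1-\sigma}$ sits on the seminorm-to-seminorm coupling, so reweighting the sup-norm part of $\|\cdot\|_{\mathcal{C}^{k,\sigma}}$ against the seminorm part leaves it untouched, and the cross-node example above shows the factor is genuinely attainable, so $T$ need not be a contraction in the standard norm under the stated hypothesis. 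To repair the argument you must do one of the following: strengthen the hypothesis to $2^{1-\sigma}\lambda_0<1$; or note that the node-vanishing persists under iteration (at stage $n$ the difference $(T^ng_1-T^ng_2)^{(k)}$ vanishes at all images of the knots under the level-$n$ compositions of the $L_i$), which gives $\|T^ng_1-T^ng_2\|_{\mathcal{C}^{k,\sigma}}\le 2^{1-\sigma}\lambda_0^n\|g_1-g_2\|_{\mathcal{C}^{k,\sigma}}$, so some iterate of $T$ is a contraction and the unique fixed point in $\mathcal{C}_f^{k,\sigma}(I)$ still follows (equivalently, the linear part of $T$ has spectral radius at most $\lambda_0$, which is what would legitimize an equivalent-norm argument --- but that is a fact to be proved, not assumed). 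As written, your proposal establishes the self-map property and the correct cross-block estimate, but not the contraction claim under the theorem's hypothesis; the paper's proof, for its part, never confronts the cross-block pairs at all.
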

\begin{proof}
Following the proof of Theorem \ref{SAINdiffthm} we assert that $Tg \in \mathcal{C}^{k}(I)$ whenever $g \in \mathcal{C}^{k,\sigma}(I)$. Further,
\begin{equation} \label{new1}
(Tg)^{(r)} (x) = f^{(r)}(x) + \frac{\alpha_i}{a_i^r} (g-b)^{(r)} \big(L_i^{-1}(x)\big), ~x \in I_i, ~i \in \mathbb{N}_{N-1}, r\in \mathbb{N}_k \cup \{0\}.
\end{equation}
The $\sigma$-th H\"older seminorm of $(Tg)^{(k)}$ is given by
\begin{equation*}
\begin{split}
[(Tg)^{(k)}]_\sigma =&~ \sup_{x, y \in I, x \neq y} \frac{|(Tg)^{(k)}(x)-(Tg)^{(k)}(y)|}{|x-y|^\sigma}\\
=&~ \max _{i \in \mathbb{N}_{N-1}} \sup _{x, y \in I_i, x \neq y}\frac{\big|\frac{\alpha_i}{a_i^k}[(g-b)^{(k)} \big(L_i^{-1}(x)\big)-(g-b)^{(k)} \big(L_i^{-1}(y)\big)]\big|}{|x-y|^\sigma}\\
\le &~ \max _{i \in \mathbb{N}_{N-1}}(\frac{|\alpha_i|}{a_i^k})\sup _{x, y \in I_i, x \neq y} \Big[ \frac{g^{(k)}\big(L_i^{-1}(x)\big)-g^{(k)}\big(L_i^{-1}(y)\big)}{|x-y|^\sigma}+  \frac{b^{(k)}\big(L_i^{-1}(x))-b^{(k)}\big (L_i^{-1}(y)\big)}{|x-y|^\sigma}\Big]\\
 =&~ \max _{i \in \mathbb{N}_{N-1}}(\frac{|\alpha_i|}{a_i^k})\sup _{x, y \in I_i, x \neq y} \Big[ \frac{|g^{(k)}\big(L_i^{-1}(x)\big)-g^{(k)}\big(L_i^{-1}(y)\big)|}{a_i ^\sigma |L_i^{-1}(x)-L_i^{-1}(y)|^\sigma}+  \frac{|b^{(k)}\big(L_i^{-1}(x))-b^{(k)}\big (L_i^{-1}(y)\big)|}{a_i ^\sigma |L_i^{-1}(x)-L_i^{-1}(y)|^\sigma}\Big]\\
 = &~ \max _{i \in \mathbb{N}_{N-1}}(\frac{|\alpha_i|}{a_i^{k+\sigma}}) \sup _{\tilde{x}, \tilde{y}\in I, \tilde{x} \neq \tilde{y}}\Big[\frac{|g^{(k)}(\tilde{x})-g^{(k)}(\tilde{y})|}{|\tilde{x}-\tilde{y}|^\sigma}+\frac{|b^{(k)}(\tilde{x})-b^{(k)}(\tilde{y})|}{|\tilde{x}-\tilde{y}|^\sigma}\Big]\\
=& \max _{i \in \mathbb{N}_{N-1}}(\frac{|\alpha_i|}{a_i^{k+\sigma}})\big( [g^{(k)}]_\sigma + [b^{(k)}]_\sigma \big).
\end{split}
\end{equation*}
Since $b$ and $g$ are in $\mathcal{C}^{k, \sigma}(I)$, the previous estimate ensures that $[(Tg)^{(k)}]_\sigma < \infty$ and hence that $Tg \in \mathcal{C}^{k, \sigma} (I)$.
Again from (\ref{new1}), for $x \in I_i$,
\begin{equation*}
\begin{split}
\big|(Tg_1-Tg_2)^{(r)}(x)\big| =&~ \frac{|\alpha_i|}{a_i^r} \big|(g_1-g_2)^{(r)}\big(L_i^{-1}(x) \big)\big|\\
\le &~ \frac{|\alpha_i|}{a_i^r} \|(g_1-g_2)^{(r)}\|_\infty
\end{split}
\end{equation*}
and hence one readily obtains
$$ \|(Tg_1-Tg_2)^{(r)}\|_\infty \le \max \Big\{\frac{|\alpha_i|}{a_i^r}: i \in \mathbb{N}_{N-1}\Big \}~ \|(g_1-g_2)^{(r)}\|_\infty.$$
On lines similar to the estimation of $[(Tg)^{(k)}]_\sigma$, we get
$$[(Tg_1-Tg_2)^{(k)}]_\sigma \le \max\Big\{\frac{|\alpha_i|}{a_i^{k+\sigma}}: i \in \mathbb{N}_{N-1}\Big\}[(g_1-g_2)^{(k)}]_\sigma.$$
From these computations we gather that
\begin{equation*}
\begin{split}
\|Tg_1-Tg_2\|_{\mathcal{C}^{k,\sigma}} = &~ \sum_{r=0}^k \|(Tg_1-Tg_2)^{(r)}\|_\infty+ [(Tg_1-Tg_2)^{(k)}]_\sigma\\
\le &~ \sum_{r=0}^k\max \Big\{\frac{|\alpha_i|}{a_i^r}: i \in \mathbb{N}_{N-1}\Big \} \|(g_1-g_2)^{(r)}\|_\infty + \max\Big\{\frac{|\alpha_i|}{a_i^{k+\sigma}}: i \in \mathbb{N}_{N-1}\Big\}[(g_1-g_2)^{(k)}]_\sigma\\
\le &~ \max _{r \in \mathbb{N}_k \cup \{0\}} \max _{i \in \mathbb{N}_{N-1}} \big\{\frac{|\alpha_i|}{a_i^r}\big\} \sum_{r=0}^k \|(g_1-g_2)^{(r)}\|_\infty + \max_{i \in \mathbb{N}_{N-1}}\Big\{\frac{|\alpha_i|}{a_i^{k+\sigma}}\Big\}[(g_1-g_2)^{(k)}]_\sigma\\
\le &~\max\big\{\frac{|\alpha_i|}{a_i^{k+\sigma}}: i \in \mathbb{N}_{N-1}\big\} \Big[ \sum_{r=0}^k \|(g_1-g_2)^{(r)}\|_\infty+[(g_1-g_2)^{(k)}]_\sigma\Big]\\
=&~\max\big\{\frac{|\alpha_i|}{a_i^{k+\sigma}}: i \in \mathbb{N}_{N-1}\big\} \|g_1-g_2\|_{\mathcal{C}^{k,\sigma}}.
\end{split}
\end{equation*}
The assumption on scaling factors now yields that $T$ is a contraction on $\mathcal{C}_f^{k, \sigma} (I)$, and hence the proof.
\end{proof}
\section{Fractal operator on Function Spaces}\label{FFS4}
Theorems \ref{FFSB}-\ref{FFSH} established in the previous section illustrate, albeit indirectly, the existence of an operator that assigns a function $f$ to its self-referential (fractal) analogue $f^\alpha$. To be precise, for a fixed set of scaling functions (factors) $\alpha_i$ and for suitable choice of base function,  there exists a map
$$\mathcal{F}^\alpha: X \to X; \quad  \mathcal{F}^\alpha(f) = f^\alpha,$$
where $X$ is one of the Banach spaces $\mathcal{B}(I)$, $\mathcal{C}^k(I)$, $\mathcal{L}^p(I)$, $\mathcal{W}^{k,p}(I)$, or $\mathcal{C}^{k, \sigma}(I)$. In this section, we offer a couple of fundamental results on this fractal operator $\mathcal{F}^\alpha$. For definiteness, we shall work with $X= \mathcal{W}^{k,p}(I)$,  other cases can be similarly dealt with. Further, we assume that the base function $b$ depends on $f$ linearly and $b = Lf$, where $L: X \to X$ is a bounded linear operator  with
respect to the norm in the corresponding space.
\begin{proposition}\label{Prop1}
For $f \in \mathcal{W}^{k,p}(I)$ and scaling factors satisfying conditions prescribed in Theorem \ref{FFSS}, we have
\begin{equation*}
\|f^\alpha-f\|_{\mathcal{W}^{k,p}} \le \left\{\begin{array}{rllll}
& \big[\sum \limits_{i \in \mathbb{N}_{N-1}} \frac{|\alpha_i|^p}{a_i^{kp-1}}\big]^{\frac{1}{p}} ~ \|f^\alpha-b\|_{\mathcal{W}^{k,p}} , \; \text{for} ~~ p \in [1, \infty),\\
& \max \big\{\frac{|\alpha_i|}{a_i^k}:i \in \mathbb{N}_{N-1}\big\} ~\|f^\alpha-b\|_{\mathcal{W}^{k,p}} , \; \; \text{for} ~~p =\infty.
\end{array}\right.
\end{equation*}
\end{proposition}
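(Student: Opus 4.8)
The plan is to read off the identity satisfied by $f^\alpha - f$ directly from the self-referential equation and then recycle, almost verbatim, the norm computations already performed in the proof of Theorem \ref{FFSS}. Set $h := f^\alpha - b$; since both $f^\alpha$ and $b$ lie in $\mathcal{W}^{k,p}(I)$ (the former by Theorem \ref{FFSS}), we have $h \in \mathcal{W}^{k,p}(I)$. Equation (\ref{ICAFWeq4}) states precisely that $(f^\alpha - f)(x) = \alpha_i\, h\big(L_i^{-1}(x)\big)$ for $x \in I_i$, $i \in \mathbb{N}_{N-1}$; that is, on each subinterval $I_i$ the function $f^\alpha - f$ is the $\alpha_i$-scaled affine pull-back of $h$, which is structurally identical to the difference $Tg_1 - Tg_2$ handled in the proof of Theorem \ref{FFSS} with $g_1 - g_2$ replaced by $h$.

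First I would note that, since weak differentiation is local and by the chain rule for weak derivatives recalled before Theorem \ref{FFSS}, for every $0 \le j \le k$ one has $D^j(f^\alpha - f) = \alpha_i\, a_i^{-j}\,(D^j h)\circ L_i^{-1}$ almost everywhere on $I_i$. For $1 \le p < \infty$ I would then compute, with the change of variable $\tilde x = L_i^{-1}(x)$ (so $\mathrm{d}x = a_i\,\mathrm{d}\tilde x$),
$$\|D^j(f^\alpha-f)\|_p^p = \sum_{i \in \mathbb{N}_{N-1}} \frac{|\alpha_i|^p}{a_i^{jp}} \int_{I_i} \big|D^j h\big(L_i^{-1}(x)\big)\big|^p\,\mathrm{d}x = \Big[\sum_{i \in \mathbb{N}_{N-1}} \frac{|\alpha_i|^p}{a_i^{jp-1}}\Big]\,\|D^j h\|_p^p .$$
Summing over $j = 0,1,\dots,k$, using $0 < a_i < 1$ to bound $a_i^{-(jp-1)} \le a_i^{-(kp-1)}$ for $j \le k$, and pulling the constant out,
$$\|f^\alpha-f\|_{\mathcal{W}^{k,p}}^p = \sum_{j=0}^k \|D^j(f^\alpha-f)\|_p^p \le \Big[\sum_{i \in \mathbb{N}_{N-1}} \frac{|\alpha_i|^p}{a_i^{kp-1}}\Big]\sum_{j=0}^k \|D^j h\|_p^p = \Big[\sum_{i \in \mathbb{N}_{N-1}} \frac{|\alpha_i|^p}{a_i^{kp-1}}\Big]\,\|h\|_{\mathcal{W}^{k,p}}^p ,$$
and taking $p$-th roots, together with $h = f^\alpha - b$, gives the first case. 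For $p = \infty$ the same splitting yields $\|D^j(f^\alpha-f)\|_\infty \le \max_{i}\frac{|\alpha_i|}{a_i^j}\|D^jh\|_\infty \le \max_{i}\frac{|\alpha_i|}{a_i^k}\|D^jh\|_\infty$, and summing over $j$ from $0$ to $k$ produces $\|f^\alpha-f\|_{\mathcal{W}^{k,\infty}} \le \max_{i}\frac{|\alpha_i|}{a_i^k}\|f^\alpha-b\|_{\mathcal{W}^{k,\infty}}$.

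The argument has no real obstacle; the only point needing a word of care is the piecewise description of the weak derivatives of $f^\alpha - f$, i.e. that $D^j(f^\alpha-f)$ coincides on each open subinterval $(x_i,x_{i+1})$ with the (weak) derivative of the change-of-variables expression $\alpha_i\, h\circ L_i^{-1}$. This is legitimate because $f^\alpha - f$ is already known to belong to $\mathcal{W}^{k,p}(I)$ and weak differentiation is a local operation, so no gluing or compatibility condition at the knots has to be checked separately — the integrals over $I$ simply decompose as sums over the $I_i$. Everything else is the bookkeeping already carried out in the proof of Theorem \ref{FFSS}.
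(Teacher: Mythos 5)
Your proposal is correct and follows essentially the same route as the paper: read off $D^j(f^\alpha-f)=\alpha_i a_i^{-j}(D^j(f^\alpha-b))\circ L_i^{-1}$ on each $I_i$ from the self-referential equation, change variables, and absorb the constants using $0<a_i<1$ so that $a_i^{-(jp-1)}\le a_i^{-(kp-1)}$. If anything, your version is slightly cleaner, since you sum over all derivatives $j=0,\dots,k$ exactly as the Sobolev norm is defined in Section~2, whereas the paper's own proof works only with the $j=0$ and $j=k$ terms (an equivalent norm); the estimate and conclusion are the same.
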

\begin{proof}
We have the functional equation
$$f^\alpha(x) = f(x) + \alpha_i (f^\alpha-b) \big(L_i^{-1}(x) \big), \quad  x \in I_i, \quad  i \in \mathbb{N}_{N-1}.$$
Therefore,
$$\big(D^k (f^\alpha-f)\big)(x)= \frac{\alpha_i}{a_i^k} (D^k (f^\alpha-b))\big(L_i^{-1}(x)\big), \quad  x \in I_i, \quad  i \in \mathbb{N}_{N-1}.$$
Assume that $1 \le p <\infty$. By a series of self-evident steps
\begin{equation*}
\begin{split}
\|D^k(f^\alpha-f)\|_p^p=&~ \int_{I} |D^k(f^\alpha-f)(x)|^p ~\mathrm{d}x\\
=&~ \sum_{i \in \mathbb{N}_{N-1}} \big(\frac{|\alpha_i|}{a_i^k}\big)^p \int_{I_i} \big|D^k (f^\alpha-b)\big(L_i^{-1}(x)\big)\big|^p ~\mathrm{d}x\\
=&~\sum_{i \in \mathbb{N}_{N-1}} \big(\frac{|\alpha_i|}{a_i^k}\big)^p \int_{I} a_i\big|D^k (f^\alpha-b)(\tilde{x})\big|^p ~\mathrm{d}\tilde{x}\\
=&~\sum_{i \in \mathbb{N}_{N-1}} \frac{|\alpha_i|^p}{a_i^{kp-1}} \|D^k(f^\alpha-b)\|_p^p,
\end{split}
\end{equation*}
and consequently,
$$\|D^k(f^\alpha-f)\|_p \le \Big[\sum_{i \in \mathbb{N}_{N-1}} \frac{|\alpha_i|^p}{a_i^{kp-1}}\Big]^{\frac{1}{p}}~ \|D^k(f^\alpha-b)\|_p.$$
Thus
\begin{equation*}
\begin{split}
\|f^\alpha-f\|_{\mathcal{W}^{k,p}} =&~ \|f^\alpha-f\|_p + \|D^k(f^\alpha-f)\|_p\\
\le &~ \Big[\sum_{i \in \mathbb{N}_{N-1}} a_i |\alpha_i|^p\Big]^{\frac{1}{p}} \|f^\alpha-b\|_p + \Big[\sum_{i \in \mathbb{N}_{N-1}} \frac{|\alpha_i|^p}{a_i^{kp-1}}\Big]^{\frac{1}{p}}~ \|D^k(f^\alpha-b)\|_p\\
\le &~ \max \Big\{ \Big[\sum_{i \in \mathbb{N}_{N-1}} a_i |\alpha_i|^p\Big]^{\frac{1}{p}},  \Big[\sum_{i \in \mathbb{N}_{N-1}} \frac{|\alpha_i|^p}{a_i^{kp-1}}\Big]^{\frac{1}{p}} \Big\} \|f^\alpha-b\|_{\mathcal{W}^{k,p}}\\
= &~\Big[\sum_{i \in \mathbb{N}_{N-1}} \frac{|\alpha_i|^p}{a_i^{kp-1}}\Big]^{\frac{1}{p}} \|f^\alpha-b\|_{\mathcal{W}^{k,p}}.
\end{split}
\end{equation*}
The proof for $p=\infty$ is similar.
\end{proof}
\begin{theorem}\label{Thm1}
Let $I_d$ be the identity operator on $\mathcal{W}^{k,p}(I)$, $b=Lf$, where $L$ is a bounded linear operator on $\mathcal{W}^{k,p}(I)$, and the scaling factors satisfy conditions prescribed in Theorem \ref{FFSS}. Then
$$\|f^\alpha-f\|_{\mathcal{W}^{k,p}} \le \frac{\big[\sum \limits_{i \in \mathbb{N}_{N-1}} \frac{|\alpha_i|^p}{a_i^{kp-1}}\big]^{\frac{1}{p}}} {1-\big[\sum \limits_{i \in \mathbb{N}_{N-1}} \frac{|\alpha_i|^p}{a_i^{kp-1}}\big]^{\frac{1}{p}}}\|I_d-L\|~\|f\|_{\mathcal{W}^{k,p}},\quad \text{for} \quad 1 \le p < \infty,$$
$$\|f^\alpha-f\|_{\mathcal{W}^{k,p}} \le \frac{ \max \big\{\frac{|\alpha_i|}{a_i^k}:i \in \mathbb{N}_{N-1}\big\}}{1- \max \big\{\frac{|\alpha_i|}{a_i^k}:i \in \mathbb{N}_{N-1}\big\}}\|I_d-L\|~\|f\|_{\mathcal{W}^{k,p}}, \quad{for} \quad p =\infty. $$
\end{theorem}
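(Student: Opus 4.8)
The plan is to combine the estimate of Proposition \ref{Prop1} with the hypothesis $b=Lf$ via a short bootstrap (self-improvement) argument. For $1\le p<\infty$ set $\gamma:=\big[\sum_{i\in\mathbb{N}_{N-1}}\tfrac{|\alpha_i|^p}{a_i^{kp-1}}\big]^{1/p}$, and observe that the condition on the scaling factors in Theorem \ref{FFSS} says exactly that $\gamma<1$; in particular $f^\alpha$ exists and lies in $\mathcal{W}^{k,p}(I)$, so $\|f^\alpha-f\|_{\mathcal{W}^{k,p}}<\infty$. Proposition \ref{Prop1} then gives $\|f^\alpha-f\|_{\mathcal{W}^{k,p}}\le \gamma\,\|f^\alpha-b\|_{\mathcal{W}^{k,p}}$.

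Next I would rewrite $f^\alpha-b$ using $b=Lf$ as $f^\alpha-b=(f^\alpha-f)+(f-Lf)=(f^\alpha-f)+(I_d-L)f$, and apply the triangle inequality together with boundedness of $I_d-L$ to get $\|f^\alpha-b\|_{\mathcal{W}^{k,p}}\le \|f^\alpha-f\|_{\mathcal{W}^{k,p}}+\|I_d-L\|\,\|f\|_{\mathcal{W}^{k,p}}$. Substituting this into the previous line yields $\|f^\alpha-f\|_{\mathcal{W}^{k,p}}\le \gamma\,\|f^\alpha-f\|_{\mathcal{W}^{k,p}}+\gamma\,\|I_d-L\|\,\|f\|_{\mathcal{W}^{k,p}}$. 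Since $\gamma<1$ and the left-hand quantity is finite, I may subtract $\gamma\|f^\alpha-f\|_{\mathcal{W}^{k,p}}$ and divide by $1-\gamma>0$ to obtain $\|f^\alpha-f\|_{\mathcal{W}^{k,p}}\le \tfrac{\gamma}{1-\gamma}\|I_d-L\|\,\|f\|_{\mathcal{W}^{k,p}}$, which is the asserted bound for $1\le p<\infty$.

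For $p=\infty$ the argument is verbatim the same, with $\gamma$ replaced by $\max\{|\alpha_i|/a_i^k:i\in\mathbb{N}_{N-1}\}$ and with the $p=\infty$ branches of Proposition \ref{Prop1} and of the hypothesis in Theorem \ref{FFSS} used in place of the $p<\infty$ ones.

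There is essentially no real obstacle; the only points deserving care are that the hypothesis on the scaling factors is invoked twice — once (through Theorem \ref{FFSS}) to guarantee that $f^\alpha$ is well defined in $\mathcal{W}^{k,p}(I)$, so that $\|f^\alpha-f\|_{\mathcal{W}^{k,p}}$ is a genuine finite number that can be absorbed, and once to ensure $1-\gamma>0$ so that the final division is legitimate — and that the decomposition $f^\alpha-b=(f^\alpha-f)+(I_d-L)f$ is what converts the geometric estimate of Proposition \ref{Prop1} into a bound in terms of the data $f$ and the deviation $\|I_d-L\|$.
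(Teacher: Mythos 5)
Your proposal is correct and follows essentially the same route as the paper: apply Proposition \ref{Prop1} to get $\|f^\alpha-f\|_{\mathcal{W}^{k,p}}\le K\|f^\alpha-b\|_{\mathcal{W}^{k,p}}$, use $b=Lf$ and the triangle inequality to bound $\|f^\alpha-b\|_{\mathcal{W}^{k,p}}$ by $\|f^\alpha-f\|_{\mathcal{W}^{k,p}}+\|I_d-L\|\,\|f\|_{\mathcal{W}^{k,p}}$, and absorb the first term using $K<1$. Your explicit remarks on finiteness of $\|f^\alpha-f\|_{\mathcal{W}^{k,p}}$ and the legitimacy of the division are details the paper leaves implicit, but the argument is the same.
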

\begin{proof}
The previous  proposition in conjunction with the triangle inequality yields
\begin{equation*}
\begin{split}
\|f^\alpha-f\|_{\mathcal{W}^{k,p}} \le&~ K \|f^\alpha-b\|_{\mathcal{W}^{k,p}}\\
\le &~ K \big[\|f^\alpha-f\|_\infty + \|f- Lf\|_\infty\big]\\
\le &~K \big[\|f^\alpha-f\|_\infty + \|I_d-L\| \|f\|_\infty \big],
\end{split}
\end{equation*}
where
\begin{equation*}
K=\left\{\begin{array}{rllll}
&\big[\sum \limits_{i \in \mathbb{N}_{N-1}} \frac{|\alpha_i|^p}{a_i^{kp-1}}\big]^{\frac{1}{p}}  , \; \text{for} ~~ p \in [1, \infty),\\
& \max \big\{\frac{|\alpha_i|}{a_i^k}:i \in \mathbb{N}_{N-1}\big\}, \; \;  \;\text{for} ~~p =\infty.\\
\end{array}\right.
\end{equation*}
from which the result follows at once.
\end{proof}
\begin{theorem}\label{THM2}
For scaling factors satisfying conditions prescribed in Theorem \ref{FFSS}, the self-referential operator $\mathcal{F}^\alpha: \mathcal{W}^{k,p}(I)\to \mathcal{W}^{k,p}(I)$ is a bounded linear operator which reduces to the identity operator for $\alpha=0$.
\end{theorem}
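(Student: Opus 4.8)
The plan is to establish the three assertions of the statement — linearity of $\mathcal{F}^\alpha$, its boundedness, and the identity $\mathcal{F}^0 = I_d$ — in that order, each being a short consequence of the self-referential functional equation together with the uniqueness of the fixed point furnished by Theorem \ref{FFSS}.

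\emph{Linearity.} Fix $f,g \in \mathcal{W}^{k,p}(I)$ and $\lambda,\mu \in \mathbb{R}$, and write $f^\alpha = \mathcal{F}^\alpha(f)$, $g^\alpha = \mathcal{F}^\alpha(g)$. By Theorem \ref{FFSS} these obey
$$f^\alpha(x) = f(x) + \alpha_i\,(f^\alpha - Lf)\big(L_i^{-1}(x)\big), \qquad g^\alpha(x) = g(x) + \alpha_i\,(g^\alpha - Lg)\big(L_i^{-1}(x)\big)$$
for $x \in I_i$, $i \in \mathbb{N}_{N-1}$. Multiplying the first equation by $\lambda$, the second by $\mu$, adding, and using the linearity of $L$, we find that $\lambda f^\alpha + \mu g^\alpha$ satisfies the self-referential equation (\ref{ICAFWeq4}) associated with the datum $\lambda f + \mu g$ and base function $L(\lambda f + \mu g)$; equivalently, it is a fixed point of $T^\alpha_{\Delta,\,L(\lambda f + \mu g),\,\lambda f + \mu g}$. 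Since Theorem \ref{FFSS} guarantees that this RB operator has a \emph{unique} fixed point, namely $\mathcal{F}^\alpha(\lambda f + \mu g)$, we conclude $\mathcal{F}^\alpha(\lambda f + \mu g) = \lambda\,\mathcal{F}^\alpha(f) + \mu\,\mathcal{F}^\alpha(g)$.

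\emph{Boundedness.} This is immediate from Theorem \ref{Thm1} (which itself rests on Proposition \ref{Prop1}). Writing $K = \big[\sum_{i \in \mathbb{N}_{N-1}} \frac{|\alpha_i|^p}{a_i^{kp-1}}\big]^{1/p}$ for $1 \le p < \infty$, and $K = \max_{i \in \mathbb{N}_{N-1}} \frac{|\alpha_i|}{a_i^k}$ for $p = \infty$, the hypothesis on the scaling factors gives $0 \le K < 1$, and since $\|I_d - L\| \le 1 + \|L\| < \infty$ the triangle inequality yields
$$\|\mathcal{F}^\alpha(f)\|_{\mathcal{W}^{k,p}} \le \|f^\alpha - f\|_{\mathcal{W}^{k,p}} + \|f\|_{\mathcal{W}^{k,p}} \le \Big(1 + \frac{K}{1-K}\,\|I_d - L\|\Big)\|f\|_{\mathcal{W}^{k,p}}.$$
Hence $\mathcal{F}^\alpha$ is bounded, and together with the previous step it is a bounded linear operator on $\mathcal{W}^{k,p}(I)$.

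\emph{The case $\alpha = 0$.} When every $\alpha_i$ vanishes, the self-referential equation (\ref{ICAFWeq4}) reduces to $f^0(x) = f(x)$ on each $I_i$, so $f^0 = f$ in $\mathcal{W}^{k,p}(I)$ and $\mathcal{F}^0 = I_d$. I expect no serious obstacle here; the only point needing a little care is the linearity argument, where $T^\alpha_{\Delta,b,f}$ depends on $f$ both directly (through the term $f(x)$) and through $b = Lf$, so the clean conclusion must be drawn by invoking uniqueness of the fixed point rather than by algebraically manipulating the operators $T^\alpha$ themselves.
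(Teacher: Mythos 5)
Your proposal is correct and follows essentially the same route as the paper: linearity via showing $\lambda f^\alpha+\mu g^\alpha$ satisfies the self-referential equation for $\lambda f+\mu g$ and invoking uniqueness of the fixed point, boundedness via Theorem \ref{Thm1} and the triangle inequality with the same constant $1+\frac{K}{1-K}\|I_d-L\|$, and the $\alpha=0$ case read off directly from the functional equation.
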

\begin{proof}
Let $f_1$ and $f_2$ be in $\mathcal{W}^{k,p}(I)$ and $\beta_1$, $\beta_2$ reals. The functional equation for $f_1^\alpha= \mathcal{F}^\alpha(f_1)$ and $f_2^\alpha= \mathcal{F}^\alpha(f_2)$ are given by
$$f_1^\alpha(x) = f_1(x) + \alpha_i (f_1^\alpha-Lf_1)\big(L_i^{-1}(x)\big), ~ x \in I_i, ~i \in \mathbb{N}_{N-1},$$
$$f_2^\alpha(x) = f_2(x) + \alpha_i (f_2^\alpha-Lf_2)\big(L_i^{-1}(x)\big), ~ x \in I_i, ~i \in \mathbb{N}_{N-1}.$$
Therefore
$$(\beta_1 f_1^\alpha + \beta_2 f_2^\alpha)(x)= (\beta_1 f_1 + \beta_2 f_2)(x)+ \alpha_i \big[\beta_1 f_1^\alpha+ \beta_2 f_2^\alpha- L(\beta_1 f_1+\beta_2 f_2)\big]\big(L_i^{-1}(x)\big),$$
from which it follows that $\beta_1f_1^\alpha + \beta_2 f_2^\alpha$ is a fixed point of the operator
$$(Tg)(x) = (\beta_1 f_1 + \beta_2 f_2)(x) + \alpha_i \big(g-L(\beta_1 f_1+\beta_2 f_2)\big)\big(L_i^{-1}(x)\big)$$
By the uniqueness of the fixed point we see that
$$\mathcal{F}^\alpha(\beta_1 f_1 + \beta_2 f_2)=(\beta_1 f_1 + \beta_2 f_2)^\alpha= \beta_1 f_1^\alpha + \beta_2 f_2^\alpha=\beta_1 \mathcal{F}^\alpha(f_1)+\beta_2 \mathcal{F}^\alpha(f_2),$$
proving the linearity of $\mathcal{F}^\alpha$.
In view of  the previous theorem we have
\begin{equation*}
\begin{split}
\|\mathcal{F}^\alpha(f)\|_{\mathcal{W}^{k,p}}= \|f^\alpha\|_{\mathcal{W}^{k,p}} \le &~ \|f^\alpha-f\|_{\mathcal{W}^{k,p}}+ \|f\|_{\mathcal{W}^{k,p}}\\
\le &~ \frac{K}{1-K}\|I_d-L\|~\|f\|_{\mathcal{W}^{k,p}} + \|f\|_{\mathcal{W}^{k,p}}\\
=&~ \Big[1+  \frac{K}{1-K}\|I_d-L\|~  \Big]\|f\|_{\mathcal{W}^{k,p}},
\end{split}
\end{equation*}
which ensures that $\mathcal{F}^\alpha$ is bounded and
$$\|\mathcal{F}^\alpha\| \le 1+  \frac{K}{1-K}\|I_d-L\|.$$
The last part of the theorem follows by noting that for $\alpha=0$, $f^\alpha=f$.
\end{proof}
\begin{theorem}\label{THM3}
Consider a scale vector $\alpha \in \mathbb{R}^{N-1}$ whose components satisfy
\begin{equation*}
\left\{\begin{array}{rllll}
& \Big[\sum \limits_{i \in \mathbb{N}_{N-1}} \frac{|\alpha_i|^p}{a_i^{kp-1}} \Big]^{1/p}<\min \big\{1, \|L\|^{-1}\big\} , \; \text{for} ~~ p \in [1, \infty),\\
& \underset{i \in \mathbb{N}_{N-1}}\max~ \{\frac{|\alpha_i|}{a_i^k}\} <\min\big\{1, \|L\|^{-1}\big\} , \; \; \; \;\; \;\;\;\;\;\;\text{for} ~~p =\infty.
\end{array}\right.
\end{equation*}
The corresponding fractal operator $\mathcal{F}^\alpha$ is bounded below. In particular,
$\mathcal{F}^\alpha$ is injective and has a closed range. In fact, $\mathcal{F}^\alpha: \mathcal{W}^{k,p}(I) \to \mathcal{F}^\alpha \big( \mathcal{W}^{k,p}(I)\big)$ is a topological isomorphism.
\end{theorem}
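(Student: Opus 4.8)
The plan is to extract from Proposition~\ref{Prop1} a quantitative lower bound of the shape $\|\mathcal{F}^\alpha(f)\|_{\mathcal{W}^{k,p}} \ge c\,\|f\|_{\mathcal{W}^{k,p}}$ with a constant $c>0$, and then to invoke the standard fact that a bounded linear operator between Banach spaces which is bounded below is automatically injective, has closed range, and is a topological isomorphism onto that range.

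First I would abbreviate by $K$ the quantity $\big[\sum_{i \in \mathbb{N}_{N-1}} |\alpha_i|^p / a_i^{kp-1}\big]^{1/p}$ when $1 \le p < \infty$ and $\max\{|\alpha_i|/a_i^k : i \in \mathbb{N}_{N-1}\}$ when $p=\infty$, so that the hypothesis reads $K < \min\{1,\|L\|^{-1}\}$. I would first record that this forces $K\|L\| < 1$: if $\|L\| \le 1$ then $K < 1$ gives $K\|L\| < 1$, while if $\|L\| > 1$ then $K < \|L\|^{-1}$ gives it directly (and if $L=0$ the inequality is trivial). Since $b = Lf$, Proposition~\ref{Prop1} and the triangle inequality give
\begin{equation*}
\|f^\alpha - f\|_{\mathcal{W}^{k,p}} \le K\,\|f^\alpha - Lf\|_{\mathcal{W}^{k,p}} \le K\,\|f^\alpha\|_{\mathcal{W}^{k,p}} + K\|L\|\,\|f\|_{\mathcal{W}^{k,p}}.
\end{equation*}
Combining this with $\|f\|_{\mathcal{W}^{k,p}} \le \|f^\alpha\|_{\mathcal{W}^{k,p}} + \|f^\alpha - f\|_{\mathcal{W}^{k,p}}$ and rearranging yields
\begin{equation*}
(1 - K\|L\|)\,\|f\|_{\mathcal{W}^{k,p}} \le (1 + K)\,\|\mathcal{F}^\alpha(f)\|_{\mathcal{W}^{k,p}},
\end{equation*}
hence $\|\mathcal{F}^\alpha(f)\|_{\mathcal{W}^{k,p}} \ge \tfrac{1 - K\|L\|}{1 + K}\,\|f\|_{\mathcal{W}^{k,p}}$ for all $f \in \mathcal{W}^{k,p}(I)$, which is exactly the assertion that $\mathcal{F}^\alpha$ is bounded below.

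Then I would harvest the consequences. Injectivity is immediate since $\mathcal{F}^\alpha(f)=0$ forces $\|f\|_{\mathcal{W}^{k,p}}=0$. For closed range, take $(f_n)$ with $\mathcal{F}^\alpha(f_n)\to y$; the lower bound shows $(f_n)$ is Cauchy in the Banach space $\mathcal{W}^{k,p}(I)$, hence $f_n \to f$, and continuity of $\mathcal{F}^\alpha$ (Theorem~\ref{THM2}) gives $y = \mathcal{F}^\alpha(f)$, so $y$ is in the range. Finally $\mathcal{F}^\alpha$ is a continuous linear bijection of $\mathcal{W}^{k,p}(I)$ onto the Banach space $\mathcal{F}^\alpha\big(\mathcal{W}^{k,p}(I)\big)$, and the same inequality gives the explicit bound $\tfrac{1+K}{1 - K\|L\|}$ on the inverse, so $\mathcal{F}^\alpha$ is a topological isomorphism onto its range.

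There is no genuine obstacle here once Proposition~\ref{Prop1} is available; the only point requiring a little care is the bookkeeping around $\min\{1,\|L\|^{-1}\}$ to secure $K\|L\| < 1$, including the degenerate case $L=0$, and — if one wants the case $p=\infty$ spelled out — observing that the estimate of Proposition~\ref{Prop1} and the triangle-inequality step above carry over verbatim with $K$ replaced by its $p=\infty$ form.
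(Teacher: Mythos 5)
Your proposal is correct and follows essentially the same route as the paper's proof: Proposition \ref{Prop1} together with the triangle inequality yields $(1-K\|L\|)\,\|f\|_{\mathcal{W}^{k,p}} \le (1+K)\,\|\mathcal{F}^\alpha(f)\|_{\mathcal{W}^{k,p}}$, and injectivity and closedness of the range are harvested exactly as in the paper (the same Cauchy-sequence argument). The only cosmetic difference is at the very end: the paper invokes the bounded inverse theorem to conclude the topological isomorphism, whereas you read the bound $\tfrac{1+K}{1-K\|L\|}$ on the inverse directly off the lower-bound inequality, which is if anything slightly more elementary.
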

\begin{proof}
From Proposition \ref{Prop1}
$$\|f\|_{\mathcal{W}^{k,p}}-\|f^\alpha\|_{\mathcal{W}^{k,p}}\le \|f-f^\alpha\|_{\mathcal{W}^{k,p}}\le K \|f^\alpha-Lf\|_{\mathcal{W}^{k,p}}\le K \big[\|f^\alpha\|_{\mathcal{W}^{k,p}}+\|L\|\|f\|_{\mathcal{W}^{k,p}}\big],$$
where $K$ is as prescribed in Theorem \ref{Thm1}. Also by the stated assumption on scale vector we have $K<\|L\|^{-1}$. Thus
$$ \|f\|_{\mathcal{W}^{k,p}} \le \frac{1+K}{1-K\|L\|} \|f^\alpha\|_{\mathcal{W}^{k,p}}.$$
That is, the operator $\mathcal{F}^\alpha$ is bounded below and hence, in particular, injective. To prove that $\mathcal{F}^\alpha \big(\mathcal{W}^{k,p}(I)\big)$ is closed, let $f_n^\alpha$ be a sequence in $\mathcal{F}^\alpha \big(\mathcal{W}^{k,p}(I)\big)$ such that
$f_n^\alpha \to g$. In particular,  $\big\{f_n^\alpha=\mathcal{F}^\alpha(f_n)\big\}$ is a Cauchy sequence in $\mathcal{F}^\alpha \big(\mathcal{W}^{k,p}(I)\big)$. Since
$$ \| f_n -f_m\|_{\mathcal{W}^{k,p}} \le  \frac{1+K}{1-K\|L\|} \| f_n^\alpha -f_m ^\alpha\|_{\mathcal{W}^{k,p}},$$
it follows that $\{f_n\}$ is a Cauchy sequence in $\mathcal{W}^{k,p}(I)$. Since $\mathcal{W}^{k,p}(I)$ is a complete space, there exists an $f \in \mathcal{W}^{k,p}(I)$ such that $f_n \to f$ and consequently, by the continuity of  the fractal operator $\mathcal{F}^\alpha$ we conclude that $g=\mathcal{F}^\alpha(f)=f^\alpha$. Now from the bounded inverse theorem (see, for instance, \cite{JC}) it follows that the inverse of the map $\mathcal{F}^\alpha: \mathcal{W}^{k,p}(I) \to \mathcal{F}^\alpha \big( \mathcal{W}^{k,p}(I)\big)$ is  a bounded linear operator, completing the proof.
\end{proof}
\begin{theorem}\label{THM6}
For a scale vector $\alpha \in \mathbb{R}^{N-1}$ whose components satisfy
 \begin{equation*}
\left\{\begin{array}{rllll}
& \Big[\sum \limits_{i \in \mathbb{N}_{N-1}} \frac{|\alpha_i|^p}{a_i^{kp-1}} \Big]^{1/p}<\big(1+\|I_d-L\|\big)^{-1}  , \; \text{for} ~~ p \in [1, \infty),\\
& \underset{i \in \mathbb{N}_{N-1}}\max~ \{\frac{|\alpha_i|}{a_i^k}\} <\big(1+\|I_d-L\|\big)^{-1}  , \; \; \; \;\; \;\;\;\;\;\;\text{for} ~~p =\infty,
\end{array}\right.
\end{equation*}
the fractal operator $\mathcal{F}^\alpha$ is a topological automorphism on $\mathcal{W}^{k,p}(I)$. Furthermore, with $K$ as in Theorem \ref{Thm1}
$$ \frac{1-K\|L\|}{1+K} \|f\|_{\mathcal{W}^{k,p}} \le \|\mathcal{F}^\alpha(f)\|_{\mathcal{W}^{k,p}} \le 1+  \frac{K}{1-K}\|I_d-L\| \|f\|_{\mathcal{W}^{k,p}}.$$
\end{theorem}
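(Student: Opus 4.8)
The plan is to recognise $\mathcal{F}^\alpha$ as a small perturbation of the identity operator and then invoke the standard Neumann series argument, drawing on the estimates already established. First I would note that the present hypothesis on the scale vector subsumes the condition $K<1$ of Theorem~\ref{FFSS}, so Theorem~\ref{THM2} applies: $\mathcal{F}^\alpha$ is a well-defined bounded linear operator on $\mathcal{W}^{k,p}(I)$, and moreover $\|\mathcal{F}^\alpha\|\le 1+\frac{K}{1-K}\|I_d-L\|$, which is exactly the upper bound asserted in the statement.

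Next I would estimate $\|\mathcal{F}^\alpha-I_d\|$. Writing $f^\alpha-Lf=(f^\alpha-f)+(I_d-L)f$ and substituting into Proposition~\ref{Prop1} gives $\|f^\alpha-f\|_{\mathcal{W}^{k,p}}\le K\|f^\alpha-f\|_{\mathcal{W}^{k,p}}+K\|I_d-L\|\,\|f\|_{\mathcal{W}^{k,p}}$; since $K<1$ one may rearrange to obtain $\|(\mathcal{F}^\alpha-I_d)(f)\|_{\mathcal{W}^{k,p}}\le \frac{K\|I_d-L\|}{1-K}\,\|f\|_{\mathcal{W}^{k,p}}$, that is, $\|\mathcal{F}^\alpha-I_d\|\le \frac{K\|I_d-L\|}{1-K}$. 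The hypothesis $K<(1+\|I_d-L\|)^{-1}$ is equivalent (multiplying out, which is legitimate because $1-K>0$) to $K\|I_d-L\|<1-K$, hence to $\|\mathcal{F}^\alpha-I_d\|<1$. Therefore $\mathcal{F}^\alpha=I_d-(I_d-\mathcal{F}^\alpha)$ is invertible with $(\mathcal{F}^\alpha)^{-1}=\sum_{n\ge 0}(I_d-\mathcal{F}^\alpha)^n$ a bounded linear operator, so $\mathcal{F}^\alpha$ is a topological automorphism of $\mathcal{W}^{k,p}(I)$.

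It then remains to record the stated lower bound. Since $\|L\|=\|I_d-(I_d-L)\|\le 1+\|I_d-L\|$, the present hypothesis forces $K<(1+\|I_d-L\|)^{-1}\le \min\{1,\|L\|^{-1}\}$, so the hypothesis of Theorem~\ref{THM3} is in force. The bounded-below estimate established there, namely $\|f\|_{\mathcal{W}^{k,p}}\le \frac{1+K}{1-K\|L\|}\|f^\alpha\|_{\mathcal{W}^{k,p}}$, rearranges to $\frac{1-K\|L\|}{1+K}\|f\|_{\mathcal{W}^{k,p}}\le \|\mathcal{F}^\alpha(f)\|_{\mathcal{W}^{k,p}}$, which together with the upper bound from Theorem~\ref{THM2} completes the proof.

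There is no genuine obstacle here, since the substantive work was carried out in Proposition~\ref{Prop1} and Theorems~\ref{THM2}--\ref{THM3}; the only point needing care is the bookkeeping showing that the single inequality $K\big(1+\|I_d-L\|\big)<1$ simultaneously delivers $\|\mathcal{F}^\alpha-I_d\|<1$ (for surjectivity) and the hypothesis $K<\|L\|^{-1}$ of Theorem~\ref{THM3} (for the stated lower bound), the latter via the elementary inequality $\|L\|\le 1+\|I_d-L\|$.
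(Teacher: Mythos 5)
Your argument is correct and follows essentially the same route as the paper: bound $\|I_d-\mathcal{F}^\alpha\|$ by $\frac{K}{1-K}\|I_d-L\|$ (you re-derive this from Proposition \ref{Prop1}, the paper cites Theorem \ref{Thm1}), observe the hypothesis makes this $<1$ and invert via the Neumann series, then obtain the two-sided norm bounds from Theorems \ref{THM2} and \ref{THM3} using $\|L\|\le 1+\|I_d-L\|$. No substantive differences.
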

\begin{proof}
From Theorem \ref{Thm1} it can be easily seen that
$$\|I_d -\mathcal{F}^\alpha\| \le \frac {K}{1-K} \|I_d-L\|.$$
In view of the assumption on the scaling vector $\alpha$, we get $K<\big(1+\|I_d-L\|\big)^{-1}$ and hence $\|I_d -\mathcal{F}^\alpha\|<1$. Consequently, the Neumann series $\sum_{j=0}^\infty (I_d-\mathcal{F}^\alpha)^j$ is convergent in the operator norm and $\mathcal{F}^\alpha=I_d-(I_d-\mathcal{F}^\alpha)$ is invertible (see, for instance, \cite{JC}). Bearing in mind that
$$\|L \| = \big \| I_d- (I_d-L) \| \le 1 + \|I_d -L \|,$$
the bounds on $\|\mathcal{F}^\alpha(f)\|_{\mathcal{W}^{k,p}}$ follow from Theorem \ref{THM2} and \ref{THM3}.
\end{proof}
The existence of Schauder bases for
Sobolev spaces  is desired for demonstrating  the existence of solutions
of nonlinear boundary value problems. Sometimes it is interesting to look for the global structure involved
in a given problem and hence self-referentiality may be advantageous. Therefore, it is worth  to search for a Schauder basis for $\mathcal{W}^{k,p}(I)$ consisting of fractal functions. It is to this that we now turn. First let us recall the following theorem which asserts the existence of Schauder basis for
$\mathcal{W}^{k,p}(I)$, where $p \geq 1$ is a real number. We reproduce its proof here briefly for the sake of completeness.  Without loss of generality assume that $I=[0,1]$.
\begin{theorem} (Fu\v{c}\'{i}k).(see \cite{F} Theorem 4.7). For $1 \le p < \infty$,
$\mathcal{W}^{k,p}(I)$ has a Schauder basis.
\end{theorem}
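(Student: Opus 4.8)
The plan is to reduce the construction to the already-known existence of a Schauder basis for $\mathcal{L}^p(I)$ — for instance the Haar system, which is a Schauder basis of $\mathcal{L}^p(I)$ for every $1 \le p < \infty$ (unconditional for $1<p<\infty$, conditional for $p=1$) — and to transport it up to $\mathcal{W}^{k,p}(I)$ by $k$-fold integration, adjoining the polynomials of degree $<k$ to account for the kernel of $D^k$. Write $\mathcal{P}_{k-1}$ for the $k$-dimensional space of polynomials of degree at most $k-1$ on $I=[0,1]$ and set
\[
\mathcal{V}_k := \big\{ g \in \mathcal{W}^{k,p}(I): g^{(j)}(0)=0 \ \text{for}\ j=0,1,\dots,k-1 \big\}.
\]
First I would invoke the one-dimensional Sobolev embedding $\mathcal{W}^{k,p}(I) \hookrightarrow \mathcal{C}^{k-1}(I)$ (valid because $I$ is an interval), which makes the Taylor-type operator $\Pi f := \sum_{j=0}^{k-1} \frac{f^{(j)}(0)}{j!}\,x^j$ a \emph{bounded} linear projection of $\mathcal{W}^{k,p}(I)$ onto $\mathcal{P}_{k-1}$ with kernel exactly $\mathcal{V}_k$; hence $\mathcal{W}^{k,p}(I) = \mathcal{P}_{k-1} \oplus \mathcal{V}_k$ as a topological direct sum.

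Next I would bring in the iterated integration operator $J^k: \mathcal{L}^p(I) \to \mathcal{W}^{k,p}(I)$,
\[
(J^k h)(x) = \frac{1}{(k-1)!}\int_0^x (x-t)^{k-1} h(t)\,\mathrm{d}t ,
\]
and check that $J^k$ is a bounded linear bijection of $\mathcal{L}^p(I)$ onto $\mathcal{V}_k$ with bounded inverse $D^k$. Boundedness holds because on $[0,1]$ one has $\|D^j J^k h\|_p \le \|D^j J^k h\|_\infty \lesssim \|h\|_1 \le \|h\|_p$ for $0 \le j \le k-1$ while $D^k J^k h = h$, and moreover $D^j J^k h$ vanishes at $0$ for $j\le k-1$, so indeed $J^k h \in \mathcal{V}_k$; surjectivity onto $\mathcal{V}_k$ is the iterated fundamental theorem of calculus for absolutely continuous (Sobolev) functions, i.e. $g = J^k(D^k g)$ whenever $g \in \mathcal{V}_k$; injectivity is immediate, since $D^k g = 0$ with $g \in \mathcal{V}_k$ forces $g=0$. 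By the bounded inverse theorem, $J^k$ is then a topological isomorphism of $\mathcal{L}^p(I)$ onto $\mathcal{V}_k$.

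Consequently, if $\{e_n\}_{n\ge 1}$ is a Schauder basis of $\mathcal{L}^p(I)$, then $\{J^k e_n\}_{n\ge 1}$ is a Schauder basis of $\mathcal{V}_k$, since the isomorphism $J^k$ transports the coordinate functionals and partial-sum projections to $\mathcal{V}_k$ while preserving their uniform boundedness and strong convergence to the identity. Finally I would use the elementary fact that prepending an algebraic basis $\{1,x,\dots,x^{k-1}\}$ of the complemented finite-dimensional subspace $\mathcal{P}_{k-1}$ to a Schauder basis of $\mathcal{V}_k$ produces a Schauder basis of $\mathcal{W}^{k,p}(I)$: with respect to $\mathcal{W}^{k,p}(I) = \mathcal{P}_{k-1} \oplus \mathcal{V}_k$ the partial-sum operators of the combined system split as $\Pi_m \oplus P_n$, with $\Pi_m$ the partial sums inside $\mathcal{P}_{k-1}$ (eventually the identity) and $P_n$ the partial-sum projections of $\{J^k e_n\}$, hence uniformly bounded, and they converge strongly to $I_d$. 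The ordered system
\[
\big\{\, 1,\ x,\ x^2,\ \dots,\ x^{k-1},\ J^k e_1,\ J^k e_2,\ J^k e_3,\ \dots \,\big\}
\]
is then the desired Schauder basis. I expect the only genuinely delicate steps to be the boundedness of $\Pi$ — which hinges on the embedding $\mathcal{W}^{k,p}(I)\hookrightarrow \mathcal{C}^{k-1}(I)$ — and the verification that $J^k$ is onto $\mathcal{V}_k$, i.e. the iterated absolute-continuity argument; everything else is routine.
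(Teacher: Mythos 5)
Your argument is correct, and it produces essentially the same basis as the paper: the paper (following Fu\v{c}\'{i}k) proceeds by induction on $k$, at each step adjoining the constant function and integrating the previous basis once, so that unwinding the induction yields exactly your system $\{1,x,\dots,x^{k-1}\}\cup\{J^k e_n\}$ up to normalization. The difference is one of packaging and of completeness of detail. You do the lift in one shot: the topological splitting $\mathcal{W}^{k,p}(I)=\mathcal{P}_{k-1}\oplus\mathcal{V}_k$ via the Taylor projection $\Pi$ (whose boundedness you correctly trace to the embedding $\mathcal{W}^{k,p}(I)\hookrightarrow\mathcal{C}^{k-1}(I)$), the identification of $J^k$ as a topological isomorphism of $\mathcal{L}^p(I)$ onto $\mathcal{V}_k$ with inverse $D^k$, transport of the basis under this isomorphism, and the standard lemma that adjoining a finite basis of a complemented finite-dimensional summand preserves the Schauder basis property. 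These are precisely the verifications that the paper's inductive step asserts but does not carry out (the paper simply states that the integrated system together with the constant is a basis of $\mathcal{W}^{k+1,p}(I)$), so your version can be read as a fully detailed, non-inductive proof of the same construction; the induction buys brevity and only ever needs the case $k\to k+1$ with a single integration, while your direct route makes the uniform boundedness of the partial-sum projections and the role of the bounded Taylor projection explicit. One minor remark: you do not need the bounded inverse theorem for $J^k$, since $D^k$ is already an explicit bounded left inverse on $\mathcal{V}_k$.
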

\begin{proof}
Proof is by induction with respect to $k$. For $k=0$, $\mathcal{W}^{0,p}(I)=\mathcal{L}^p(I)$, and has a Schauder basis. Let $\{f_n^k\}$ be a Schauder basis for $\mathcal{W}^{k,p}(I)$ and $\{\beta_n^k\}$ be a sequence of continuous linear functionals such that for each $f\in \mathcal{W}^{k,p}(I)$, $f= \sum_{n=1}^\infty \beta_n^k (f) f_n^k$. For $f \in \mathcal{W}^{k,p}(I)$, set
\begin{equation*}
\begin{split}
& f_1^{k+1}(x) \equiv 1,~~~~~~\beta_1^{k+1}(f) =f(0)\\
&f_n^{k+1}(x) = \int_{0}^x f_{n-1}^k (t)~\mathrm{d}t,~~~~~~\beta_n^{k+1}(f) = \beta_{n-1}^k(f'), ~~n \ge 2.
\end{split}
\end{equation*}
Then $\{f_n^{k+1}\}$ is a Schauder basis in $\mathcal{W}^{k+1,p}(I)$.
\end{proof}
\begin{theorem}
For $1 \le p < \infty$, the space $\mathcal{W}^{k,p}(I)$ admits a Schauder basis consisting of self-referential  functions.
\end{theorem}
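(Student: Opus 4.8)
The plan is to transport a known Schauder basis of $\mathcal{W}^{k,p}(I)$ through the $\alpha$-fractal operator. I would fix a partition $\Delta = \{x_1,\dots,x_N\}$ and affine maps $L_i$ as in Theorem \ref{FFSS}, choose the bounded linear base operator $L$ so that $L \neq I_d$ (so that the resulting fractal functions are genuinely self-referential rather than the originals), and then pick a nonzero scale vector $\alpha \in \mathbb{R}^{N-1}$ whose entries are small enough to satisfy the hypotheses of Theorem \ref{THM6}. This last choice is always possible, since $\big[\sum_{i \in \mathbb{N}_{N-1}} |\alpha_i|^p a_i^{1-kp}\big]^{1/p}$ (respectively $\max_{i \in \mathbb{N}_{N-1}} |\alpha_i| a_i^{-k}$ for $p = \infty$) tends to $0$ as $\alpha \to 0$. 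With these data in place, Theorem \ref{THM6} guarantees that $\mathcal{F}^\alpha: \mathcal{W}^{k,p}(I) \to \mathcal{W}^{k,p}(I)$ is a topological automorphism.

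The core of the argument is then the standard principle that a topological isomorphism between Banach spaces sends a Schauder basis to a Schauder basis. Concretely, letting $\{f_n^k\}$ be the Schauder basis of $\mathcal{W}^{k,p}(I)$ furnished by the theorem of Fu\v{c}\'{i}k reproduced above and $\{\beta_n^k\}$ its coefficient functionals, I would check that every $g \in \mathcal{W}^{k,p}(I)$ has the unique expansion $g = \sum_{n=1}^\infty \big(\beta_n^k \circ (\mathcal{F}^\alpha)^{-1}\big)(g)\, \mathcal{F}^\alpha(f_n^k)$: one applies $(\mathcal{F}^\alpha)^{-1}$ to $g$, expands in the basis $\{f_n^k\}$, and applies the bounded operator $\mathcal{F}^\alpha$ term by term; uniqueness follows from injectivity of $\mathcal{F}^\alpha$ together with uniqueness of the $\{f_n^k\}$-expansion, while each $\beta_n^k \circ (\mathcal{F}^\alpha)^{-1}$ is continuous as a composition of continuous maps. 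Hence $\{\mathcal{F}^\alpha(f_n^k)\}_{n \ge 1} = \{(f_n^k)^\alpha\}_{n \ge 1}$ is a Schauder basis of $\mathcal{W}^{k,p}(I)$.

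It remains to observe that each element of this basis is self-referential: by construction $(f_n^k)^\alpha = \mathcal{F}^\alpha(f_n^k)$ is the fixed point of the Read--Bajraktarevi\'{c} operator (\ref{RBCS}) associated with $f_n^k$, so it obeys the self-referential functional equation (\ref{ICAFWeq4}), and equivalently its graph decomposes into affine images of itself as in (\ref{SAINsr}). I do not expect a substantive obstacle here, since the analytic content is already contained in Theorems \ref{FFSS}--\ref{THM6}; the only points needing care are verifying that the smallness conditions of Theorem \ref{THM6} are compatible with $\alpha \neq 0$ and $L \neq I_d$ (so that the basis elements are genuinely fractal, not merely Fu\v{c}\'{i}k's basis relabelled) and making the ``isomorphisms preserve bases'' step explicit if one prefers not to invoke it as a citation.
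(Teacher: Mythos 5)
Your proposal is correct and follows essentially the same route as the paper: fix scaling factors satisfying the hypothesis of Theorem \ref{THM6} so that $\mathcal{F}^\alpha$ is a topological automorphism, transport Fu\v{c}\'{i}k's Schauder basis $\{f_n\}$ to $\{f_n^\alpha\}$, and verify existence and uniqueness of the expansion via $(\mathcal{F}^\alpha)^{-1}$ exactly as the paper does. Your added remarks about choosing $\alpha \neq 0$ and $L \neq I_d$ to ensure the basis elements are genuinely fractal are a harmless (and sensible) refinement not spelled out in the paper.
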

\begin{proof}
Let $\{f_n\}$ be a Schauder basis for $\mathcal{W}^{k,p}(I)$ with associated coefficient functionals $\{\beta_n\}$.
Consider the scaling factors $\alpha_i$, $i\in \mathbb{N}_{N-1}$, such that  the condition prescribed in Theorem \ref{THM6} is satisfied so that $\mathcal{F}^\alpha$ is a topological automorphism on $\mathcal{W}^{k,p}(I)$. Let $f \in \mathcal{W}^{k,p}(I)$. Then
$(\mathcal{F}^ \alpha)^{-1}(f) \in \mathcal{W}^{k,p}(I)$ and   $$(\mathcal{F}^\alpha)^{-1}(f)= \sum_{n=1}^\infty \beta_n \big(\big( \mathcal{F}^ \alpha)^{-1}(f)\big) f_n.$$
Since $\mathcal{F}^ \alpha$  is a bounded linear map we obtain  $$f= \sum_{n=1}^\infty  \beta_n\big(\big( \mathcal{F}^\alpha)^{-1}(f)\big) f_n^ \alpha.$$
Next we prove that the representation is unique. Let $f=
\sum_{n=1}^\infty \gamma_n f_n^ \alpha$ be another representation of $f$. Continuity of $(\mathcal{F}^ \alpha)^{-1}$
ensures that $(\mathcal{F}^ \alpha)^{-1}(f)=\sum_{n=1}^\infty \gamma_n f_n$
and hence  $\gamma_n =\beta_n\big((\mathcal{F}^\alpha)^{-1}(f)\big)$  for all $n \in \mathbb{N}$. Thus  $\{f_n^\alpha\}$ is a Schauder basis for $\mathcal{W}^{k,p}(I)$ comprising of self-referential functions.
\end{proof}
For $p=2$, the Sobolev space $\mathcal{W}^{k,2}(I)=\mathcal{H}^k(I)$ is a Hilbert space and hence we can talk about the adjoint (in the usual sense) $(\mathcal{F}^\alpha)^*$ of the fractal operator $\mathcal{F}^\alpha$. We have the following.
\begin{theorem}\label{THM5}
For a scale vector $\alpha \in \mathbb{R}^{N-1}$ satisfying $\Big[\sum_{i \in \mathbb{N}_{N-1}} \frac{|\alpha_i|^2}{a_i^{2k-1}}\Big]^{\frac{1}{2}}<\min \big\{1,\|L\|^{-1}\big\}$,
$$\mathcal{H}^k(I)=rg (\mathcal{F}^\alpha) \oplus ker \big(({\mathcal{F}^\alpha})^*\big),$$ where $rg (A)$ and $ker (A)$ represent range and kernel of an operator $A$. Also, $(\mathcal{F}^\alpha)^*$ is surjective.
\end{theorem}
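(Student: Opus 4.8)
The plan is to reduce the statement to two standard facts about bounded operators on a Hilbert space, using as input the already-established fact that $\mathcal{F}^\alpha$ is bounded below. First I would note that for $p=2$ the hypothesis on the scale vector, $\big[\sum_{i\in\mathbb{N}_{N-1}}\frac{|\alpha_i|^2}{a_i^{2k-1}}\big]^{1/2}<\min\{1,\|L\|^{-1}\}$, is precisely the condition of Theorem \ref{THM3} specialized to $p=2$ (with $\|L\|$ the operator norm with respect to the $\mathcal{H}^k(I)$-norm), and that it also entails the contraction condition of Theorem \ref{FFSS}, so that $\mathcal{F}^\alpha:\mathcal{H}^k(I)\to\mathcal{H}^k(I)$ is a well-defined bounded linear operator which, by Theorem \ref{THM3}, is bounded below. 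In particular $\mathcal{F}^\alpha$ is injective and $rg(\mathcal{F}^\alpha)$ is a closed subspace of $\mathcal{H}^k(I)$.

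Next I would invoke the orthogonal decomposition attached to any bounded linear operator $A$ on a Hilbert space $H$, namely $H=\overline{rg(A)}\oplus ker(A^*)$, which is immediate from the identity $rg(A)^{\perp}=ker(A^*)$ and the projection theorem. Applying this with $A=\mathcal{F}^\alpha$ and $H=\mathcal{H}^k(I)$, and using that $rg(\mathcal{F}^\alpha)$ is already closed so that $\overline{rg(\mathcal{F}^\alpha)}=rg(\mathcal{F}^\alpha)$, I obtain $\mathcal{H}^k(I)=rg(\mathcal{F}^\alpha)\oplus ker\big((\mathcal{F}^\alpha)^*\big)$, which is the first assertion.

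For surjectivity of $(\mathcal{F}^\alpha)^*$ I would argue that, since $rg(\mathcal{F}^\alpha)$ is closed, the closed range theorem gives that $rg\big((\mathcal{F}^\alpha)^*\big)$ is closed as well, while in general $\overline{rg\big((\mathcal{F}^\alpha)^*\big)}=ker(\mathcal{F}^\alpha)^{\perp}$; injectivity of $\mathcal{F}^\alpha$ forces $ker(\mathcal{F}^\alpha)=\{0\}$, hence $\overline{rg\big((\mathcal{F}^\alpha)^*\big)}=\mathcal{H}^k(I)$, and combining with closedness yields $rg\big((\mathcal{F}^\alpha)^*\big)=\mathcal{H}^k(I)$. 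Alternatively one can avoid the closed range theorem and deduce surjectivity of $(\mathcal{F}^\alpha)^*$ directly from the lower bound $\|f\|_{\mathcal{H}^k}\le\frac{1+K}{1-K\|L\|}\|\mathcal{F}^\alpha f\|_{\mathcal{H}^k}$ of Theorem \ref{THM3} by a routine Hahn--Banach argument: given $h\in\mathcal{H}^k(I)$, the functional $\mathcal{F}^\alpha f\mapsto\langle f,h\rangle$ is well defined and bounded on $rg(\mathcal{F}^\alpha)$, hence extends to an element of $\mathcal{H}^k(I)$ which is readily seen to be a preimage of $h$ under $(\mathcal{F}^\alpha)^*$.

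These steps are genuinely routine functional analysis; the only point I would take care over — and the place a referee is likely to check — is the bookkeeping in the first paragraph: that the exponents and the constant $\|L\|$ appearing in the hypothesis are exactly those of Theorems \ref{FFSS} and \ref{THM3} with $p=2$, so that those results legitimately apply and $\mathcal{F}^\alpha$ really is a bounded, injective operator with closed range on $\mathcal{H}^k(I)$.
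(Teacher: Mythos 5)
Your proposal is correct and follows essentially the same route as the paper: closedness of $rg(\mathcal{F}^\alpha)$ from the bounded-below estimate of Theorem \ref{THM3}, the orthogonal decomposition $\mathcal{H}^k(I)=rg(\mathcal{F}^\alpha)\oplus ker\big((\mathcal{F}^\alpha)^*\big)$, and then surjectivity of the adjoint from injectivity of $\mathcal{F}^\alpha$ together with the fact that the adjoint has closed range precisely when the operator does. The alternative Hahn--Banach argument you sketch is a harmless variant, but the core argument matches the paper's proof.
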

\begin{proof}
From the proof of Theorem \ref{THM3} it follows that for a scale vector $\alpha$ satisfying the stated hypothesis, $rg (\mathcal{F}^\alpha)$ is closed. By the orthogonal decomposition theorem for a Hilbert space (see, for instance, \cite{JC})
$$\mathcal{H}^k(I)= rg (\mathcal{F}^\alpha) \oplus rg (\mathcal{F}^\alpha)^\bot = rg (\mathcal{F}^\alpha) \oplus ker  \big((\mathcal{F}^\alpha)^*\big).$$
Again by the orthogonal decomposition,
$$\mathcal{H}^k(I)=\overline{rg \big((\mathcal{F}^\alpha)^*\big)} \oplus \overline{rg \big((\mathcal{F}^\alpha)^*\big)}^\bot=\overline{ rg \big((\mathcal{F}^\alpha)^*\big)} \oplus rg \big({(\mathcal{F}^\alpha)^*}\big)^\bot=\overline{rg \big((\mathcal{F}^\alpha)^*\big)}\oplus ker (\mathcal{F}^\alpha).$$
For a scale vector satisfying the given condition, $\mathcal{F}^\alpha$ is injective. That is, $ker (\mathcal{F}^\alpha)=\{0\}$. Consequently,
$\mathcal{H}^k(I)=\overline{ rg \big((\mathcal{F}^\alpha)^*\big)}$ , i.e., $rg \big((\mathcal{F}^\alpha)^*\big)$ is dense in  $\mathcal{H}^k(I)$. For a linear and bounded operator of a Hilbert space, its range is closed if and
only if the range of its adjoint is closed (see, for instance, \cite{JC}). Therefore, $rg\big((\mathcal{F}^\alpha)^*\big)$ is closed and hence $rg \big((\mathcal{F}^\alpha)^*\big)= \mathcal{H}^k(I)$.
\end{proof}
\begin{theorem}
For a scale vector $\alpha$ satisfying $\Big[\sum_{i \in \mathbb{N}_{N-1}} \frac{|\alpha_i|^2}{a_i^{2k-1}}\Big]^{\frac{1}{2}}<\big(1+\|I_d-L\|\big)^{-1}$, the fractal operator $\mathcal{F}^\alpha: \mathcal{H}^k(I) \to \mathcal{H}^k(I)$ is Fredholm with index zero.
\end{theorem}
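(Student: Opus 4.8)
The plan is to obtain this statement as an immediate corollary of Theorem \ref{THM6}. First I would observe that for $p=2$ the Sobolev space $\mathcal{W}^{k,2}(I)$ coincides with $\mathcal{H}^k(I)$, and that the hypothesis $\big[\sum_{i \in \mathbb{N}_{N-1}} |\alpha_i|^2/a_i^{2k-1}\big]^{1/2} < \big(1+\|I_d-L\|\big)^{-1}$ is precisely the $p=2$ instance of the condition imposed in Theorem \ref{THM6}. Hence Theorem \ref{THM6} applies and guarantees that $\mathcal{F}^\alpha : \mathcal{H}^k(I) \to \mathcal{H}^k(I)$ is a topological automorphism, i.e.\ a bounded linear bijection whose inverse is also bounded.

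The second step is to recall the definition: a bounded linear operator $A$ on a Banach space is Fredholm if $\ker A$ is finite-dimensional and $rg(A)$ is closed and of finite codimension, the index being $\dim\ker A - \mathrm{codim}\, rg(A)$. For the topological automorphism $\mathcal{F}^\alpha$ we have $\ker\mathcal{F}^\alpha = \{0\}$, so $\dim\ker\mathcal{F}^\alpha = 0$, while $rg(\mathcal{F}^\alpha) = \mathcal{H}^k(I)$ is closed with codimension $0$. Therefore $\mathcal{F}^\alpha$ is Fredholm and its index equals $0-0=0$.

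An alternative, slightly more conceptual route would use the observation, already contained in the proof of Theorem \ref{THM6}, that $\|I_d - \mathcal{F}^\alpha\| \le \frac{K}{1-K}\|I_d - L\| < 1$ under the present hypothesis, together with the facts that the identity on $\mathcal{H}^k(I)$ is Fredholm of index zero and that the set of Fredholm operators is open in the operator norm with locally constant index; since every operator within distance one of $I_d$ is invertible, $\mathcal{F}^\alpha$ again lies in the index-zero component. Either way the substantive work has been carried out in Theorem \ref{THM6}, so I do not anticipate any real obstacle here; the only point that warrants a sentence of justification is the elementary fact that an invertible bounded operator is Fredholm with vanishing index, which is immediate from the definitions.
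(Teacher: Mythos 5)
Your proposal is correct and takes essentially the same route as the paper: both deduce from Theorem \ref{THM6} (in the case $p=2$, where $\mathcal{W}^{k,2}(I)=\mathcal{H}^k(I)$) that $\mathcal{F}^\alpha$ is a topological automorphism, and then conclude immediately that it is Fredholm of index zero. The only cosmetic difference is that the paper justifies this last step by citing Ramm's characterization (Fredholm $=$ isomorphism plus finite-rank perturbation, with the perturbation taken to be $0$), whereas you verify the definition directly via $\ker \mathcal{F}^\alpha=\{0\}$ and $rg(\mathcal{F}^\alpha)=\mathcal{H}^k(I)$, which is equally valid.
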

\begin{proof}
With the stated assumption on $\alpha$, Theorem \ref{THM6} asserts that the operator  $\mathcal{F}^\alpha$ is an isomorphism. Therefore, by the following theorem (see \cite{Ramm})
$\mathcal{F}^\alpha$ is Fredholm.\\
A linear bounded operator $A$ is Fredholm if and only if $A=B+F$, where $B$ is an isomorphism and $F$ has finite rank.\\ Also,
$$ \text{ind}~ \mathcal{F}^\alpha:= \text{dim}~ ker(\mathcal{F}^\alpha)- \text{codim}~rg(\mathcal{F}^\alpha)=0,$$ delivering the proof.
\end{proof}
\begin{remark}
Throughout the article we have confined our discussion to real-valued functions. However, we remark that many of our results apply immediately
to complex-valued functions defined on a real compact interval. For instance, denoting the real and imaginary parts of $f$ by $f_{re}$ and $f_{im}$ respectively,  one may consider the operator $$\mathcal{F}_C^\alpha: \mathcal{W}^{k,p}(I, \mathbb{C}) \to \mathcal{W}^{k,p}(I, \mathbb{C}), \quad \mathcal{F}_C^\alpha(f)=\mathcal{F}_C^\alpha(f_{re} + i f_{im}) = \mathcal{F}^\alpha (f_{re}) + i
\mathcal{F}^\alpha (f_{im}).$$
It is not hard to show that $\mathcal{F}_C^\alpha$ is a bounded linear operator. Other properties of $\mathcal{F}_C^\alpha$ can be similarly dealt with.
\end{remark}

\end{document}